\documentclass[12pt]{article}
\usepackage[active]{srcltx}
\usepackage{amssymb,latexsym,amsmath,enumerate,verbatim,amsfonts,amsthm}
\usepackage{cite}
\numberwithin{equation}{section}
\newtheorem{theorem}{Theorem}[section]
\newtheorem{corollary}[theorem]{Corollary}

\newtheorem{lemma}{Lemma}[section]
\newtheorem{example}{Example}[section]

\theoremstyle{definition}
\newtheorem{definition}{Definition}[section]
\numberwithin{equation}{section}

\addtolength{\oddsidemargin}{-0.1 \textwidth}
\addtolength{\textwidth}{0.2 \textwidth}
\addtolength{\topmargin}{-0.1 \textheight}
\addtolength{\textheight}{0.2 \textheight}
\begin{document}
	\title{B tensors and tensor complementarity problems	\thanks{This  work was supported by
			the National Natural Science Foundation of P.R. China (Grant No.
			11571095, 11601134).}}

	\date{}
\author{Yisheng Song\thanks{Corresponding author. School of Mathematics and Information Science  and Henan Engineering Laboratory for Big Data Statistical Analysis and Optimal Control, Henan Normal University, XinXiang HeNan,  P.R. China, 453007. Email: songyisheng@htu.cn.}, \quad   Wei Mei\thanks{School of Mathematics and Information Science, Henan Normal University, XinXiang HeNan,  P.R. China, 453007. Email: 1017187432@qq.com}}
	\maketitle

	\begin{abstract}
		In this paper, one of our main  purposes is to prove the boundedness of solution set of tensor complementarity problem with B tensor such that the specific bounds only depend on the structural properties of tensor.  To achieve this purpose,  firstly, we present that each B tensor is strictly semi-positive and each B$_0$ tensor is  semi-positive. Subsequencely,  the strictly lower and upper bounds of different  operator norms are given for two positively homogeneous operators defined by B tensor. Finally, with the help of the upper bounds of different  operator norms, we show the strcitly lower bound of solution set of tensor complementarity problem with B tensor.  Furthermore, the upper bounds of spectral radius and $E$-spectral radius of B (B$_0$) tensor are obtained, respectively, which achieves our another objective. In particular, such the upper bounds only depend on the principal diagonal entries of tensors.  \vspace{3mm}
		
		\noindent {\bf Key words:}\hspace{2mm} $B$ tensor, Tensor complementarity problem, Spectral radius, Norm, Upper and lower bounds.
	\vspace{3mm}
	
	\noindent {\bf AMS subject classifications (2010):}\hspace{2mm} 47H15, 47H12, 34B10, 47A52, 47J10, 47H09, 15A48, 47H07
	\end{abstract}

	\section{Introduction}
	
As a natural extension of linear complementarity problem, the tensor complementarity problem is a new topic emerged from the tensor community. Meanwhile, such a problem is a special type of nonlinear complementarity problems. So the tensor complementarity problem seems to have similar  properties to the linear complementarity problem, and to have its particular properties other than ones of the classical nonlinear complementarity problem, and to have  some nice properites that depended on itself special structure. The notion of the tensor complementarity problem was used firstly by Song and Qi \cite{SQ2015,SQ2017}.  Recently, Huang and Qi \cite{HQ2017} formulated an $n-$person noncooperative game as a tensor complementarity problem and showed that a
Nash equilibrium point of the multilinear game is equivalent to a solution of the tensor complementarity problem.
By using specially structured properities of tensors,  the properities of the tensor complementarity problem have been well studied in the literatures. For example, see Song, Yu \cite{SY2016} and Song, Qi \cite{SQ2016} for strictly semi-positive tensors, Gowda, Luo, Qi and Xiu \cite{GLQX2015} and Luo, Qi and Xiu \cite{LQX} for Z-tensors, Ding, Luo and Qi \cite{DLQ2015} for  P-tensors, Wang, Huang, Bai \cite{WHB2016} for exceptionally regular tensors, Bai, Huang and Wang \cite{BHW2016} for strong P-tensors, Che, Qi, Wei \cite{CQW} for some special tensors, Huang, Suo, Wang\cite{HSW} for Q-tensors. Song and Qi \cite{SQ13}, Ling, He, Qi \cite{LHQ2015,LHQ2016}, Chen, Yang, Ye \cite{CYY} studied the tensor eigenvalue complementarity problem.

In past several decades, numerous mathematical works concerned with error bound analysis for the solution of linear complementarity problem by means of the special structure of the matrix $A$. For more details, see \cite{CLWV2015,D2011,DLL2012,DLL2013,GP2009,SW2013}. Recently, motivated by the study on error bounds for linear complementarity problem, Song, Yu \cite{SY2016} and Song, Qi \cite{SQ5} extended the error bounds results of the linear complementarity problem to the tensor complementarity problem with strictly semi-positive tensors. However, there are relatively few works in the specific upper or lower bounds of the tensor complementarity problem, which is a weak link in this topics.

In this paper, we will give the boundedness of solution set of tensor complementarity problem with B tensor. Moreover, we will present the specific lower bounds of such a problem, that only depend upon the structural property of B tensor.

To achieve the above goal, we need study the structured properities of tensor. Nowadays, miscellaneous structured tensors have been widely studied (Qi and Luo \cite{QL2017}), which is one of hot research topic. For more detail, see Zhang et al. \cite{ZQZ2014} and Ding et al.  \cite{DQW2013} for M-tensors, Song, Qi \cite{SQ2015} for $P$ $(P_{0})$ tensors and $B$ $(B_{0})$ tensors, Li and Li \cite{LL2015} for double $B$ tensors, Song, Qi \cite{SQ2014,SQ-2017} and Mei, Song \cite{MW2016} for Hilbert tensors. Recently, the concept of B tensor was first used by Song and  Qi \cite{SQ2015}. They gave many nice structured properties that is similar to ones of B matrices.   For more nice properties and applications of B matrices, see Pe\~{n}a \cite{JMP2001,JMP2003}. It is well known that each B matrix is a P matrix. However, the same conclusion only holds for even order symmetric B tensor \cite{YY2014}.  Qi and Song \cite{QS2014} showed taht an even order symmetric $B$ tensor is positive definite. Yuan and You \cite{YY2014} proved that a non-symmetric $B$ tensor is not $P$ tensor in general.  So there are many special properities of B tensors for further and serious consideration.

In section 3, we prove that each B tensor is strictly semi-positive and each B$_0$ tensor is  semi-positive. So, the solution set of tensor complementarity problem with B tensor is bounded. In order to presenting the specific lower bounds of such a problem,  in section 4, we give the strictly lower and upper bounds of different  operator norms  for two positively homogeneous operators defined by B tensor. By means of such the upper bounds, we establish the strcitly lower bound of solution set of tensor complementarity problem with B tensor.  Furthermore, we  achieve our another objective with the help of upper bounds of operator norms. That is, we obtain  the upper bounds of spectral radius and $E$-spectral radius of B (B$_0$) tensor, which only depend on the principal diagonal entries of tensors.

	\section{Preliminaries and basic facts}
	
	An $m$-order $n$-dimensional tensor (hypermatrix) $\mathcal{A} = (a_{i_1\cdots i_m})$ is a multi-array of real entries $a_{i_1\cdots i_m}\in\mathbb{R}$, where $i_j \in [n]=\{1,2,\cdots,n\}$ for $j \in [m]=\{1,2,\cdots,m\}$. The set of all $m$-order $n$-dimensional tensor is denoted  by $T_{m,n}$.  If the entries $a_{i_1\cdots i_m}$ are
	invariant under any permutation of their indices, then we call $\mathcal{A}$  a symmetric tensor, denoted by $S_{m,n}$. Let  $\mathcal{A} = (a_{i_1\cdots i_m})\in T_{m,n}$ and a vector $x=(x_{1},x_{2},\cdots,x_{n})^\top\in \mathbb{R}^{n}$. Then $\mathcal{A}x^{m-1}$ is a vector with its $i$th component defined by $$(\mathcal{A}x^{m-1})_{i}:=\sum_{i_{2},\cdots,i_{m}=1}^{n}a_{ii_{2}\cdots i_{m}}x_{i_{2}}\cdots x_{i_{m}}, \forall i\in [n]$$ and $\mathcal{A}x^{m}$ is a homogeneous polynomial of degree $m$, defined by $$\mathcal{A}x^{m}:=x^{T}(\mathcal{A}x^{m-1})=\sum_{i_{1},i_{2},\cdots,i_{m}=1}^{n}a_{i_{1}i_{2}\cdots i_{m}}x_{i_{1}}x_{i_{2}}\cdots x_{i_{m}}.$$
	
	For any $q\in \mathbb{R}^{n}$,  the tensor complementarity problem, denoted by $TCP(\mathcal{A},q)$, is to find $x\in \mathbb{R}^{n}$ such that \begin{equation}\label{TCP} x\geq0, q+\mathcal{A}x^{m-1}\geq0 \mbox{ and } x^{T}(q+\mathcal{A}x^{m-1})=0,\end{equation} or to show that no such vector exists.
	
		An n-dimensional $B$ matrix $B=(b_{ij})$ is a square real matrix with its entries satisfying that for all $i\in [n]$ $$\sum_{j=1}^{n}b_{ij}>0 \mbox{ and } \frac{1}{n}\sum_{j=1}^{n}b_{ij}>b_{ik}, i\neq k$$ As a natural extension of $B$ matrices. Song and Qi \cite{SQ2015} gave the definition of $B$ and $B_{0}$ tensor.

\begin{definition}\label{defn21} Let $\mathcal{B}=(b_{i_{1}\cdots i_{m}})\in T_{m,n}$. Then $\mathcal{B}$ is said to be
	\begin{itemize}
			\item[(i)]  a $B$ tensor if for all $i\in[n],\sum\limits_{i_{2},\cdots,i_{m}=1}^{n}b_{ii_{2}i_{3}\cdots i_{m}}>0$ and $$\frac{1}{n^{m-1}}(\sum_{i_{2},\cdots,i_{m}=1}^{n}b_{ii_{2}i_{3}\cdots i_{m}})>b_{ij_{2}j_{3}\cdots j_{m}} \mbox{ for all } (j_{2},j_{3},\cdots,j_{m})\neq (i,i,\cdots,i);$$
			\item[(ii)]  a $B_{0}$ tensor if for all $i\in[n],\sum\limits_{i_{2},\cdots,i_{m}=1}^{n}b_{ii_{2}i_{3}\cdots i_{m}}\geq 0$ and $$\frac{1}{n^{m-1}}(\sum_{i_{2},\cdots,i_{m}=1}^{n}b_{ii_{2}i_{3}\cdots i_{m}})\geq b_{ij_{2}j_{3}\cdots j_{m}} \mbox{ for all } (j_{2},j_{3},\cdots,j_{m})\neq (i,i,\cdots,i).$$
			\end{itemize}
\end{definition}

 For each $i\in [n]$,
let
\begin{equation}\label{eq22}\beta_i(\mathcal{B})=\max\{0,
b_{ij_2j_3\cdots j_m}; (j_2, j_3, \cdots, j_m)\ne (i, i, \cdots, i),
j_2,j_3,\cdots, j_m \in [n] \}.\end{equation}

\begin{lemma}\label{lem21} \emph{(Song and Qi \cite[Theorem 5.1, 5.2, 5.3]{SQ2015})} Let $\mathcal{B}=(b_{i_{1}\cdots i_{m}})\in T_{m,n}$. If $\mathcal{B}$ is  a $B$ tensor, then for each $i\in [n]$, \begin{itemize}
		\item[(i)] $ b_{ii\cdots i}>|b_{ij_{2}j_{3}\cdots j_{m}}|  \emph{ for all } (j_{2},j_{3},\cdots,j_{m})\neq (i,i,\cdots,i), \ j_{2},j_{3},\cdots,j_{m}\in [n]$;
			\item[(ii)] $\sum\limits_{i_2,\cdots, i_m=1}^{n} b_{ii_2i_3\cdots
				i_m}>n^{m-1}\beta_i(\mathcal{B})$;
			\item[(iii)] $b_{ii\cdots i}>\sum\limits_{b_{ii_2\cdots i_m}<0} |b_{ii_2i_3\cdots i_m}|$.
\end{itemize}			
If $\mathcal{B}$ is a B$_0$ tensor, then the above two inequalities hold with ``$>$'' being
			replaced by ``$\ge$''.
\end{lemma}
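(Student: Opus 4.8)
The plan is to prove (ii) directly from the definition and then to deduce (i) and (iii) from it by a row-sum bookkeeping argument. Throughout, fix $i\in[n]$ and abbreviate the $i$th slice sum by $R_i:=\sum_{i_2,\cdots,i_m=1}^{n}b_{ii_2\cdots i_m}$, noting that $R_i>0$ by the definition of a $B$ tensor and that there are exactly $n^{m-1}-1$ off-diagonal multi-indices $(j_2,\cdots,j_m)\neq(i,\cdots,i)$.

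First I would establish (ii). Since the off-diagonal indices form a finite set, the maximum $M_i:=\max\{b_{ij_2\cdots j_m};\ (j_2,\cdots,j_m)\neq(i,\cdots,i)\}$ is attained; applying the defining inequality $\tfrac{1}{n^{m-1}}R_i>b_{ij_2\cdots j_m}$ at the maximizing index gives $\tfrac{1}{n^{m-1}}R_i>M_i$, while $R_i>0$ gives $\tfrac{1}{n^{m-1}}R_i>0$. Because each of the two quantities $0$ and $M_i$ is strictly below $\tfrac{1}{n^{m-1}}R_i$, so is their maximum, i.e. $\tfrac{1}{n^{m-1}}R_i>\max\{0,M_i\}=\beta_i(\mathcal{B})$, which is exactly (ii).

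Next I would derive the diagonal bounds. Writing $R_i=b_{ii\cdots i}+\sum_{\mbox{\scriptsize off-diag}}b_{ij_2\cdots j_m}$ and using that every off-diagonal entry is $\le\beta_i(\mathcal{B})$ with $n^{m-1}-1$ such entries, I get $b_{ii\cdots i}=R_i-\sum_{\mbox{\scriptsize off-diag}}b_{ij_2\cdots j_m}\ge R_i-(n^{m-1}-1)\beta_i(\mathcal{B})>n^{m-1}\beta_i(\mathcal{B})-(n^{m-1}-1)\beta_i(\mathcal{B})=\beta_i(\mathcal{B})\ge 0$ by (ii). For (iii) I keep only the nonnegative off-diagonal terms on the right: since $\sum_{\mbox{\scriptsize off-diag},\,b_{ij\cdots}\ge 0}b_{ij_2\cdots j_m}\le(n^{m-1}-1)\beta_i(\mathcal{B})$, the identity $b_{ii\cdots i}+\sum_{b_{ij\cdots}<0}b_{ij_2\cdots j_m}=R_i-\sum_{\mbox{\scriptsize off-diag},\,b_{ij\cdots}\ge 0}b_{ij_2\cdots j_m}>\beta_i(\mathcal{B})\ge 0$ rearranges into $b_{ii\cdots i}>\sum_{b_{ij\cdots}<0}|b_{ij_2\cdots j_m}|$, which is (iii).

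The one genuinely delicate point is the absolute value in (i) for a negative entry. For a nonnegative off-diagonal entry this is immediate from $b_{ii\cdots i}>\beta_i(\mathcal{B})\ge b_{ij_2\cdots j_m}=|b_{ij_2\cdots j_m}|$; for a negative entry at a fixed index $(k_2,\cdots,k_m)$, the clean trick is to isolate that term and bound only the remaining $n^{m-1}-2$ off-diagonal entries, giving $b_{ii\cdots i}+b_{ik_2\cdots k_m}=R_i-\sum_{\mbox{\scriptsize off-diag}\neq(k_2,\cdots,k_m)}b_{ij_2\cdots j_m}\ge R_i-(n^{m-1}-2)\beta_i(\mathcal{B})>2\beta_i(\mathcal{B})\ge 0$, i.e. $b_{ii\cdots i}>-b_{ik_2\cdots k_m}=|b_{ik_2\cdots k_m}|$. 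I expect the bookkeeping of how many off-diagonal terms are dropped (one term for the diagonal bound and (iii), two terms for the negative case of (i)) to be the main place to be careful. Finally, I would remark that all three arguments chain only the defining inequalities of $\mathcal{B}$, so for a $B_0$ tensor they carry over verbatim with every strict ``$>$'' weakened to ``$\ge$''.
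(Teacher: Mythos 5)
This lemma is stated in the paper as an imported result (Song and Qi, Theorems 5.1--5.3 of the cited reference) and is given no proof here, so there is no in-paper argument to compare yours against; judged on its own, your proof is correct and complete. Part (ii) is exactly the defining inequality applied at a maximizing off-diagonal index combined with $R_i>0$, and the counting in the remaining parts checks out: there are $n^{m-1}-1$ off-diagonal multi-indices, each entry is at most $\beta_i(\mathcal{B})\ge 0$, so $b_{ii\cdots i}\ge R_i-(n^{m-1}-1)\beta_i(\mathcal{B})>\beta_i(\mathcal{B})\ge 0$, the identity behind (iii) is valid, and the two-term-dropping bound $b_{ii\cdots i}+b_{ik_2\cdots k_m}\ge R_i-(n^{m-1}-2)\beta_i(\mathcal{B})>2\beta_i(\mathcal{B})\ge 0$ settles the negative case of (i). Two minor remarks. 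The separate computation for a negative entry in (i) is actually redundant: once you have (iii) and $b_{ii\cdots i}>\beta_i(\mathcal{B})$, any negative off-diagonal entry satisfies $|b_{ik_2\cdots k_m}|\le\sum_{b_{ii_2\cdots i_m}<0}|b_{ii_2\cdots i_m}|<b_{ii\cdots i}$, while a nonnegative one satisfies $|b_{ij_2\cdots j_m}|\le\beta_i(\mathcal{B})<b_{ii\cdots i}$. Also, in (iii) the sum over negative entries could a priori contain the diagonal term; your identity tacitly excludes it, which is justified only because you have already proved $b_{ii\cdots i}>0$, so that dependence is worth making explicit (as is the harmless degenerate case $n=1$, where there are no off-diagonal indices and $\beta_i(\mathcal{B})=0$). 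Your closing observation that every step weakens verbatim from ``$>$'' to ``$\ge$'' for $B_0$ tensors is correct.
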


The concepts of tensor eigenvalues were introduced by Qi \cite{LQ1,LQ2} to the higher order symmetric tensors, and the existence of the eigenvalues and some applications were studied there. Lim \cite{LL} independently introduced real tensor eigenvalues and obtained some existence results using a variational approach.

\begin{definition}\label{defn22} Let $\mathcal{A}=(a_{i_{1}\cdots i_{m}})\in T_{m,n}$.
	\begin{itemize}
			\item[(i)]  A number $\lambda\in \mathbb{C}$ is called an eigenvalue of $\mathcal{A}$ if there is a nonzero vector $x$ such that
	\begin{equation}\label{eq23}
	\mathcal{A}x^{m-1}=\lambda x^{[m-1]}.
	\end{equation}
and $x$ is called  an eigenvector of $\mathcal{A}$ associated with $\lambda$. We call such an eigenvalue $H$-eigenvalue if it is real and has a real eigenvector $x$, and call such a real eigenvector $x$  H-eigenvector.
			\item[(ii)] A number $\mu\in \mathbb{C}$ is said to be an $E$-eigenvalue of $\mathcal{A}$ if there exists a nonzero vector $x$ such that
	\begin{equation}\label{eq24}
	\mathcal{A}x^{m-1}=\mu x(x^{T}x)^{\frac{m-2}{2}},
	\end{equation}
	and $x$ is called an $E$-eigenvector of $\mathcal{A}$ associated with $\mu$. It is clear that if $x$ is real, then $x$ is also real. In this case, $\mu$ and $x$ are called a $Z$-eigenvalue of $\mathcal{A}$ and a $Z$-eigenvector of $\mathcal{A}$, respectively.\\
			\end{itemize}
\end{definition}

We now give the
definitions of  (strictly) semi-positive tensors (Song and Qi \cite{SQ2017,SQ2016}).

\begin{definition} \label{defn23}
	Let $\mathcal{A}  = (a_{i_1\cdots i_m}) \in T_{m, n}$.   $\mathcal{A}$ is said to be\begin{itemize}
		\item[(i)]  semi-positive iff for each $x\geq0$ and $x\ne0$, there exists an index $k\in [n]$ such that $$x_k>0\mbox{ and }\left(\mathcal{A} x^{m-1}\right)_k\geq0;$$
		\item[(ii)]   strictly semi-positive iff for each $x\geq0$ and $x\ne0$, there exists an index $k\in [n]$ such that $$x_k>0\mbox{ and }\left(\mathcal{A} x^{m-1}\right)_k>0.$$
	\end{itemize}
\end{definition}

For $x\in \mathbb{R}^{n}$, it is well known that
\begin{equation}\label{eq25}
\|x\|_{\infty}:=\max\{|x_{i}|;i\in [n]\}  \mbox{  and  }  \|x\|_{p}:=(\sum_{i=1}^{n}|x_{i}|^{p})^{\frac{1}{p}}(p\geq 1)
\end{equation}
are two main norms defined on $\mathbb{R}^{n}$. Then for a continuous, positively homogeneous operator $T:\mathbb{R}^{n}\rightarrow \mathbb{R}^{n}$, it is obvious that
\begin{equation}\label{eq26}
\|T\|_{p}:=\max \limits_{\|x\|_{p}=1}\|T(x)\|_{p}  \mbox{  and  }  \|T\|_{\infty}:=\max \limits_{\|x\|_{\infty}=1}\|T(x)\|_{\infty}
\end{equation}
are two operator norms of T.
For  $\mathcal{A}\in T_{m,n}$, we may define a continuous, positively homogeneous operator $T_{\mathcal{A}}:\mathbb{R}^{n}\rightarrow \mathbb{R}^{n}$ by
\begin{equation}\label{TA}
	T_{\mathcal{A}}(x): =
	\begin{cases}
	\|x\|_{2}^{2-m}\mathcal{A}x^{m-1}    & x\neq 0. \\
	0  &  x=0.
	\end{cases}
	\end{equation}
 When $m$ is even, we may define another continuous, positively homogeneous operator $F_\mathcal{A} : \mathbb{R}^n \to \mathbb{R}^n$ by for any $x \in \mathbb{R}^n$,
\begin{equation} \label{FA}
F_\mathcal{A} (x) :=\left(\mathcal{A}  x^{m-1}\right)^{\left[\frac1{m-1}\right]}.
\end{equation} The
following upper bounds and properities of the operator norm were established  by
Song and Qi \cite{SQ2013,SQ5}.

\begin{lemma}\label{lem22} \emph{(Song and Qi \cite[Theorem 4.3]{SQ2013} and \cite[Lemma 3, Lemma 4]{SQ5})} Let $\mathcal{A}=(a_{i_{1}\cdots i_{m}})\in T_{m,n}$, Then
	\begin{itemize}
		\item[(i)]  $\|T_{\mathcal{A}}\|_{\infty}\leq \max \limits_{i\in [n]}\sum \limits_{i_{2},\cdots,i_{m}=1}^{n}|a_{ii_{2}\cdots i_{m}}|;$
		\item[(ii)] $\|F_{\mathcal{A}}\|_{\infty}\leq \max \limits_{i\in [n]}\left(\sum \limits_{i_{2},\cdots,i_{m}=1}^{n}|a_{ii_{2}\cdots i_{m}}|\right)^{\frac1{m-1}}$ if $m$ is even;
		\item[(iii)]  $\|T_{\mathcal{A}}\|_{p}\leq n^{\frac{m-2}{p}}\left(\sum \limits_{i=1}^{n}\left(\sum \limits_{i_{2},\cdots,i_{m}=1}^{n}|a_{ii_{2}\cdots i_{m}}|\right)^{p}\right)^{\frac{1}{p}};$
	\item[(iv)]  $\|F_{\mathcal{A}}\|_{p}	\leq\left(\sum\limits_{i=1}^{n} \left(\sum\limits_{i_{2},\cdots,i_{m}=1}^{n}|a_{ii_{2}\cdots i_{m}}|\right)^{\frac{p}{m-1}}\right)^{\frac1p}$ if $m$ is even.
	\end{itemize}
\end{lemma}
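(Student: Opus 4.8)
The plan is to establish each of the four bounds by estimating the relevant operator at a fixed point $x$ on the corresponding unit sphere and then passing to the supremum, which equals the operator norm by the definitions in \eqref{eq26} together with the continuity and positive homogeneity of $T_{\mathcal{A}}$ and $F_{\mathcal{A}}$. The common engine for all four parts is the elementary entrywise estimate
\[
|(\mathcal{A}x^{m-1})_i| \le \sum_{i_2,\cdots,i_m=1}^{n} |a_{ii_2\cdots i_m}|\,|x_{i_2}|\cdots|x_{i_m}|,
\]
obtained by applying the triangle inequality to the defining sum of $(\mathcal{A}x^{m-1})_i$. From this point the infinity-norm and $p$-norm families, and the two operators $T_{\mathcal{A}}$ and $F_{\mathcal{A}}$, differ only in how the monomials $|x_{i_2}|\cdots|x_{i_m}|$ and the normalizing scalars are controlled.

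For the infinity-norm bounds (i) and (ii), I would fix $x$ with $\|x\|_\infty = 1$, so that $|x_{i_j}| \le 1$ for every index and hence $|(\mathcal{A}x^{m-1})_i| \le \sum_{i_2,\cdots,i_m=1}^{n}|a_{ii_2\cdots i_m}|$. For (i) the only remaining point is the scalar factor $\|x\|_2^{2-m}$ in the definition of $T_{\mathcal{A}}$: since $m\ge 2$ and $\|x\|_2 \ge \|x\|_\infty = 1$, we have $\|x\|_2^{2-m}\le 1$, so this factor can only decrease the estimate. For (ii) the assumption that $m$ is even makes $t\mapsto t^{1/(m-1)}$ a well-defined increasing odd function on $\mathbb{R}$, so the entrywise bound passes through the $(m-1)$-th root to give $|(F_{\mathcal{A}}(x))_i| \le \left(\sum_{i_2,\cdots,i_m=1}^{n}|a_{ii_2\cdots i_m}|\right)^{1/(m-1)}$; maximizing over $i\in[n]$ and then over the unit sphere completes both cases.

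For the $p$-norm bounds (iii) and (iv), I would fix $x$ with $\|x\|_p = 1$, which again gives $|x_{i_j}|\le \|x\|_p = 1$ and hence the same entrywise bound $|(\mathcal{A}x^{m-1})_i| \le \sum_{i_2,\cdots,i_m=1}^{n}|a_{ii_2\cdots i_m}|$. Raising to the $p$-th power and summing over $i$ yields $\|\mathcal{A}x^{m-1}\|_p \le \left(\sum_{i=1}^n(\sum_{i_2,\cdots,i_m=1}^n|a_{ii_2\cdots i_m}|)^p\right)^{1/p}$ for (iii), and (after first taking the $(m-1)$-th root entrywise) the claimed bound $\left(\sum_{i=1}^n(\sum_{i_2,\cdots,i_m=1}^n|a_{ii_2\cdots i_m}|)^{p/(m-1)}\right)^{1/p}$ for (iv), which then requires nothing further. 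The main obstacle is the constant $n^{(m-2)/p}$ in (iii), which must be extracted from the scalar $\|x\|_2^{2-m}$. Since $2-m\le 0$, I need a lower bound on $\|x\|_2$, and the chain $\|x\|_2 \ge \|x\|_\infty \ge n^{-1/p}\|x\|_p = n^{-1/p}$ (the last step from $\|x\|_p^p \le n\|x\|_\infty^p$) gives $\|x\|_2^{2-m} \le n^{(m-2)/p}$; multiplying this by the $\|\mathcal{A}x^{m-1}\|_p$ estimate produces exactly the asserted constant. The only genuinely delicate bookkeeping is tracking the direction of each norm inequality as it is raised to the negative exponent $2-m$.
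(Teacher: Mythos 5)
Your argument is correct: each of the four bounds follows from the triangle-inequality estimate $|(\mathcal{A}x^{m-1})_i|\le\sum_{i_2,\cdots,i_m=1}^{n}|a_{ii_2\cdots i_m}|$ on the relevant unit sphere, and your handling of the scalar $\|x\|_2^{2-m}$ (via $\|x\|_2\ge\|x\|_\infty\ge n^{-1/p}\|x\|_p$, with the inequality direction correctly reversed under the nonpositive exponent $2-m$) is exactly what is needed to produce the constant $n^{(m-2)/p}$ in (iii). The paper itself states this lemma only as a citation to Song and Qi without reproducing a proof, so there is nothing internal to compare against; your derivation is the standard one used in those references and is consistent with the norm-estimation technique the authors later employ in Theorems \ref{thm41} and \ref{thm42}.
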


\section{B tensor is strictly semi-positive}

\begin{theorem}\label{thm31} Let $\mathcal{B}$ be an $m$-order $n$-dimensional $B$ tensor. Then $\mathcal{B}$ is strictly semi-positive.
\end{theorem}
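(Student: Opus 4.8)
The plan is to verify Definition~\ref{defn23}(ii) directly: given an arbitrary $x\geq0$ with $x\neq0$, I would exhibit a single index $k$ satisfying both $x_k>0$ and $(\mathcal{B}x^{m-1})_k>0$. The natural candidate is the index where $x$ attains its maximum, i.e.\ choose $k\in[n]$ with $x_k=\|x\|_{\infty}=\max_{i\in[n]}x_i$. Since $x\geq0$ and $x\neq0$ we have $x_k=\|x\|_{\infty}>0$, so the first requirement is automatic and the whole burden falls on showing positivity of the $k$-th component of $\mathcal{B}x^{m-1}$.

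For that component I would split off the diagonal term and estimate the remainder:
$$(\mathcal{B}x^{m-1})_k=b_{kk\cdots k}x_k^{m-1}+\sum_{(i_2,\cdots,i_m)\neq(k,\cdots,k)}b_{ki_2\cdots i_m}x_{i_2}\cdots x_{i_m}.$$
The key elementary observation is that $0\leq x_{i_2}\cdots x_{i_m}\leq x_k^{m-1}$ for every multi-index, because $x_k$ is the largest component. Hence off-diagonal terms with a nonnegative coefficient contribute $\geq0$, while each off-diagonal term with $b_{ki_2\cdots i_m}<0$ satisfies $b_{ki_2\cdots i_m}x_{i_2}\cdots x_{i_m}\geq -|b_{ki_2\cdots i_m}|x_k^{m-1}$ (the inequality on the monomials reverses when multiplied by a negative coefficient). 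Collecting these bounds gives
$$(\mathcal{B}x^{m-1})_k\geq x_k^{m-1}\left(b_{kk\cdots k}-\sum_{b_{ki_2\cdots i_m}<0}|b_{ki_2\cdots i_m}|\right),$$
where the inner sum runs over the off-diagonal negative entries in row $k$; the diagonal entry never appears there because $b_{kk\cdots k}>0$ by Lemma~\ref{lem21}(i).

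At this point the conclusion is immediate from the structural input I am allowed to assume: Lemma~\ref{lem21}(iii) states exactly that $b_{kk\cdots k}>\sum_{b_{ki_2\cdots i_m}<0}|b_{ki_2\cdots i_m}|$, so the parenthesised factor is strictly positive, and $x_k^{m-1}>0$ as already noted. Therefore $(\mathcal{B}x^{m-1})_k>0$, which together with $x_k>0$ verifies strict semi-positivity. I expect no genuine obstacle here; the only points requiring care are the sign bookkeeping when replacing $x_{i_2}\cdots x_{i_m}$ by $x_k^{m-1}$ inside the negatively weighted terms, and correctly excluding the diagonal index from the negative sum. The $B_0$ case would be handled identically, yielding only $(\mathcal{B}x^{m-1})_k\geq0$ and hence semi-positivity, consistent with the weak-inequality version of Lemma~\ref{lem21}.
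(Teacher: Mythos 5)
Your argument is correct and is essentially the paper's proof: the paper also selects the index of the maximal component, performs the same split into the diagonal term, the negatively weighted off-diagonal terms (bounded below by replacing the monomial with $x_i^{m-1}$), and the nonnegative terms, and then closes with Lemma~\ref{lem21}(iii). The only cosmetic difference is that the paper frames it as a proof by contradiction rather than a direct verification.
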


\begin{proof} Suppose that $\mathcal{B}$ is not strictly semi-positive. Then there exists $x\geq0$ and $x\ne0$,  for all  $k\in  [n]$ with $x_k>0$  such that $$\left(\mathcal{B} x^{m-1}\right)_k\leq0.$$
Choose $x_i>0$  with $x_i\geq x_k$ for all $k\in  [n]$. Clearly, $\left(\mathcal{B} x^{m-1}\right)_i\leq0.$  Then we have
$$\aligned
0\geq	\left(\mathcal{B} x^{m-1}\right)_i&=\sum_{i_{2},i_{3},\cdots,i_{m}=1}^{n}b_{ii_{2}\cdots i_{m}}x_{i_{2}}x_{i_{3}}\cdots x_{i_{m}}\\
        &=b_{ii\cdots i}x_i^{m-1}+\sum_{b_{ii_{2}\cdots i_{m}}<0}b_{ii_{2}\cdots i_{m}}x_{i_{2}}x_{i_{3}}\cdots x_{i_{m}}\\ & \ \ \ \ +\sum_{b_{ii_{2}\cdots i_{m}}>0}b_{ii_{2}\cdots i_{m}}x_{i_{2}}x_{i_{3}}\cdots x_{i_{m}}\\
        &\geq b_{ii\cdots i}x_i^{m-1}+\sum_{b_{ii_{2}\cdots i_{m}}<0}b_{ii_{2}\cdots i_{m}}x_i^{m-1}\\ & \ \ \ \ +\sum_{b_{ii_{2}\cdots i_{m}}>0}b_{ii_{2}\cdots i_{m}}x_{i_{2}}x_{i_{3}}\cdots x_{i_{m}}\\
		&\geq\left(b_{ii\cdots i}+\sum_{b_{ii_{2}\cdots i_{m}}<0}b_{ii_{2}\cdots i_{m}}\right)x_i^{m-1}.
		\endaligned
		$$
By Lemma \ref{lem21} (iii), we obtain $$0<\left(b_{ii\cdots i}+\sum_{b_{ii_{2}\cdots i_{m}}<0}b_{ii_{2}\cdots i_{m}}\right)x_i^{m-1}\leq\left(\mathcal{B} x^{m-1}\right)_i\leq0,$$
a contradiction. Consequently, $\mathcal{B}$ must be strictly semi-positive.
	\end{proof}
Using the similar proof technique, it is easy to prove the following conclusion.

\begin{theorem}\label{thm32} Let $\mathcal{B}$ be an $m$-order $n$-dimensional $B_0$ tensor. Then $\mathcal{B}$ is  semi-positive.
\end{theorem}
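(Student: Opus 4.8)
The plan is to mirror the contradiction argument already used for Theorem~\ref{thm31}, simply shifting every strict inequality to a non-strict one (and conversely) so that it matches the $B_0$ definition. First I would negate semi-positivity: if $\mathcal{B}$ were not semi-positive, then there would exist $x\geq0$ with $x\neq0$ such that for every index $k$ with $x_k>0$ one has $(\mathcal{B}x^{m-1})_k<0$. Note the inequality is now \emph{strict}, because the defining property of semi-positivity only asks for $(\mathcal{B}x^{m-1})_k\geq0$. As before, I would then pick an index $i$ realizing the largest component, $x_i>0$ with $x_i\geq x_k$ for all $k$, so that in particular $(\mathcal{B}x^{m-1})_i<0$.

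Next I would reproduce verbatim the lower-bound estimate on $(\mathcal{B}x^{m-1})_i$. Splitting the defining sum into the diagonal term, the negative off-diagonal terms, and the positive off-diagonal terms, I discard the positive block (it is nonnegative) and bound each negative term $b_{ii_2\cdots i_m}x_{i_2}\cdots x_{i_m}$ from below by $b_{ii_2\cdots i_m}x_i^{m-1}$, using $0\leq x_{i_2}\cdots x_{i_m}\leq x_i^{m-1}$ together with the sign condition $b_{ii_2\cdots i_m}<0$. This yields exactly the same conclusion as in Theorem~\ref{thm31}, namely
\[
(\mathcal{B}x^{m-1})_i \ \geq\ \left(b_{ii\cdots i}+\sum_{b_{ii_2\cdots i_m}<0}b_{ii_2\cdots i_m}\right)x_i^{m-1}.
\]

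The only substantive modification occurs in the invocation of Lemma~\ref{lem21}. For a $B_0$ tensor, part~(iii) holds with ``$>$'' weakened to ``$\geq$'', so $b_{ii\cdots i}+\sum_{b_{ii_2\cdots i_m}<0}b_{ii_2\cdots i_m}\geq0$ and hence the right-hand side above is $\geq0$. Combining this with $(\mathcal{B}x^{m-1})_i<0$ gives the contradiction $0>(\mathcal{B}x^{m-1})_i\geq0$, forcing $\mathcal{B}$ to be semi-positive.

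I do not expect a genuine obstacle, since the scaffolding is identical to that of Theorem~\ref{thm31}; the argument is essentially a sign-bookkeeping exercise. The one point demanding care is precisely that bookkeeping: the negated conclusion now supplies a \emph{strict} ``$<0$'', whereas Lemma~\ref{lem21}(iii) for $B_0$ tensors supplies only a \emph{weak} ``$\geq0$''. It is exactly this strict/weak pairing that still closes the contradiction, so I would double-check that the strictness lands on the correct side before declaring the proof complete.
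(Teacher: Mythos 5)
Your proposal is correct and is exactly the argument the paper intends: the paper gives no separate proof of Theorem~\ref{thm32}, remarking only that it follows by ``the similar proof technique'' as Theorem~\ref{thm31}, and your version carries that out faithfully — negating semi-positivity to get a strict $(\mathcal{B}x^{m-1})_k<0$ at every positive coordinate, running the identical splitting and lower-bound estimate at a maximal coordinate, and closing the contradiction against the weak inequality from Lemma~\ref{lem21}(iii) in its $B_0$ form. Your explicit attention to which side of the contradiction carries the strict inequality is precisely the point that makes the adaptation work.
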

Clearly, by Theorem 3.2 of Song and Yu \cite{SY2016} and Corollary 3.3 of Song and Qi \cite{SQ2017}, we easily obtain the following.
\begin{corollary}
\label{cor33} Let $\mathcal{B}$ be an $m$-order $n$-dimensional $B$ tensor. Then for each $q \in \mathbb{R}^n$, the tensor complementarity problem $TCP(\mathcal{B}, q)$ has always a solution. Furthermore, the solution set
of the $TCP(\mathcal{B}, q)$ is bounded  for each $q \in \mathbb{R}^n$.
\end{corollary}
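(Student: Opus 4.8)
The plan is to reduce the entire statement to the single structural fact, established in Theorem \ref{thm31}, that a $B$ tensor is strictly semi-positive, and then to feed this into the existence-and-boundedness theory for strictly semi-positive tensors developed in Song and Yu \cite{SY2016} and Song and Qi \cite{SQ2017}. Concretely, once strict semi-positivity is in hand, the cited Theorem 3.2 of \cite{SY2016} supplies the boundedness of the solution set and Corollary 3.3 of \cite{SQ2017} supplies solvability for every $q$, so the corollary follows at once. For transparency I would also reconstruct the boundedness half directly, since it is the quantitative heart of the matter and is reused in the later sections.

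For boundedness I would exploit positive homogeneity together with compactness of the unit sphere. Define
$$\gamma := \min_{x \geq 0,\ \|x\|_\infty = 1}\ \max_{i \in [n]}\ x_i\,(\mathcal{B}x^{m-1})_i.$$
The minimum is attained because $x \mapsto \max_i x_i(\mathcal{B}x^{m-1})_i$ is continuous and the set $\{x \geq 0 : \|x\|_\infty = 1\}$ is compact, and strict semi-positivity forces this maximand to be strictly positive at every feasible point, whence $\gamma > 0$. Rescaling by $\|x\|_\infty$ and using the degree-$(m-1)$ homogeneity of $\mathcal{B}x^{m-1}$ yields $\max_i x_i(\mathcal{B}x^{m-1})_i \geq \gamma\,\|x\|_\infty^{m}$ for all $x \geq 0$. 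Now let $x$ solve $TCP(\mathcal{B}, q)$. Since $x \geq 0$, $q + \mathcal{B}x^{m-1} \geq 0$ and their inner product vanishes, each summand $x_i(q + \mathcal{B}x^{m-1})_i$ is zero, so $x_i(\mathcal{B}x^{m-1})_i = -x_i q_i$ for every $i$. Combining gives $\gamma\,\|x\|_\infty^{m} \leq \max_i(-x_i q_i) \leq \|x\|_\infty\,\|q\|_\infty$, hence $\|x\|_\infty \leq (\|q\|_\infty/\gamma)^{1/(m-1)}$, a bound independent of the particular solution. This proves the solution set is bounded.

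For existence I would invoke a degree/homotopy argument rather than a direct construction. The a priori bound just obtained confines the solution set of the homotopy joining the natural map of $TCP(\mathcal{B},q)$ to that of a trivially solvable problem into one fixed ball, so the topological degree of the natural equation over a sufficiently large ball is well defined and homotopy-invariant; evaluating it at the trivial endpoint gives degree one, which forces a zero, i.e.\ a solution. The main obstacle is precisely this existence step: making the homotopy admissible requires knowing that no solution escapes to the boundary of the ball for any homotopy parameter, which in turn rests on the strict-semi-positivity (equivalently $R_0$) property, namely that the homogeneous problem $TCP(\mathcal{B},0)$ has only the zero solution. This is exactly what is abstracted in Corollary 3.3 of \cite{SQ2017}, so in the final write-up I would simply cite it; the delicate bookkeeping lies in verifying the homotopy invariance, not in any computation specific to $B$ tensors.
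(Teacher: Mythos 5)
Your proposal is correct and follows essentially the same route as the paper: the paper's entire proof is to combine Theorem \ref{thm31} (every $B$ tensor is strictly semi-positive) with Theorem 3.2 of Song and Yu \cite{SY2016} and Corollary 3.3 of Song and Qi \cite{SQ2017}, exactly as you do. Your additional self-contained derivation of the boundedness half (the compactness--homogeneity argument giving $\|x\|_\infty^{m-1}\leq \|q\|_\infty/\gamma$) is a sound reconstruction of the cited result and goes beyond what the paper writes, but it does not change the underlying approach.
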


\section{Boundedness about B tensors}

By means of the special structure of B tensor, now we present its upper bounds of the operator norm.

\begin{theorem}\label{thm41} Let $\mathcal{B}$ be an m-order n-dimensional $B$ tensor. Then
\begin{itemize}
			\item[(i)]  $ n^{\frac{m}{2}}\max_{i\in [n]}\beta_i(\mathcal{B})<n^{\frac{2-m}2}\max\limits_{i\in [n]}\sum\limits_{i_{2},\cdots,i_{m}=1}^{n}b_{ii_{2}\cdots i_{m}}\leq\|T_{\mathcal{B}}\|_{\infty}< n^{\frac{m}{2}}\max \limits_{i\in [n]}b_{ii\cdots i}$;
			\item[(ii)] $n^{\frac{mp-2}{2p}} \left(\sum\limits_{i=1}^{n}\left(\beta_i(\mathcal{B})\right)^p\right)^\frac1p<n^{\frac{2p-pm-2}{2p}}\left( \sum\limits_{i=1}^{n} \left(\sum\limits_{i_{2},\cdots,i_{m}=1}^{n}b_{ii_{2}\cdots i_{m}}\right)^p\right)^\frac1p\leq\|T_{\mathcal{B}}\|_{p}< n^{\frac{mp-2}{2p}}(\sum \limits_{i=1}^{n}b_{ii\cdots i}^{p})^{\frac{1}{p}}$ if $p\geq1,$
			\end{itemize}
where $\beta_i(\mathcal{B})=\max\{0,
b_{ij_2j_3\cdots j_m}; (j_2, j_3, \cdots, j_m)\ne (i, i, \cdots, i),
j_2,j_3,\cdots, j_m \in [n] \}$.
\end{theorem}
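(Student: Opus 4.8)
The statement is a chain of three inequalities in each of (i) and (ii), and the plan is to read it as a two-sided bound on the operator norm: the middle term will come from evaluating $T_{\mathcal B}$ at one carefully chosen vector, the leftmost comparison is pure arithmetic built on the structural estimates of Lemma \ref{lem21}, and the rightmost (upper) bound will require a direct pointwise estimate of $\|T_{\mathcal B}(x)\|$ rather than an appeal to Lemma \ref{lem22}.

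For the lower bounds I would test with the all-ones vector $e=(1,\dots,1)^\top$. Because $\mathcal B$ is a $B$ tensor each row sum $(\mathcal B e^{m-1})_i=\sum_{i_2,\dots,i_m}b_{ii_2\cdots i_m}$ is positive, so $\|\mathcal B e^{m-1}\|_\infty=\max_i\sum_{i_2,\dots,i_m}b_{ii_2\cdots i_m}$ and $\|e\|_2=\sqrt n$; substituting into \eqref{TA} gives $\|T_{\mathcal B}(e)\|_\infty=n^{(2-m)/2}\max_i\sum_{i_2,\dots,i_m}b_{ii_2\cdots i_m}$, and since $\|e\|_\infty=1$ this is at most $\|T_{\mathcal B}\|_\infty$, which is the middle inequality of (i). The same vector, normalized to unit $\ell_p$ norm, yields the middle inequality of (ii) after tracking the exponents of $n$. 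The leftmost inequalities then follow from Lemma \ref{lem21}(ii), namely $\sum_{i_2,\dots,i_m}b_{ii_2\cdots i_m}>n^{m-1}\beta_i(\mathcal B)$: inserting this into the middle terms (and, in (ii), raising to the $p$-th power, summing over $i$, and taking $p$-th roots) produces precisely the factors $n^{m/2}$ and $n^{(mp-2)/(2p)}$.

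The main obstacle is the rightmost inequality, and here Lemma \ref{lem22} is too crude: estimating $\sum_{i_2,\dots,i_m}|b_{ii_2\cdots i_m}|\le n^{m-1}b_{ii\cdots i}$ via Lemma \ref{lem21}(i) introduces a factor $n^{m-1}$, far larger than the required $n^{m/2}$. I would instead estimate from scratch. The entrywise bound $|b_{ii_2\cdots i_m}|\le b_{ii\cdots i}$ of Lemma \ref{lem21}(i) gives $|(\mathcal B x^{m-1})_i|\le b_{ii\cdots i}\|x\|_1^{m-1}$, hence $\|T_{\mathcal B}(x)\|_\infty\le\max_i b_{ii\cdots i}\,\|x\|_2^{2-m}\|x\|_1^{m-1}$ and, after raising to the $p$-th power and summing, $\|T_{\mathcal B}(x)\|_p\le\big(\sum_i b_{ii\cdots i}^p\big)^{1/p}\|x\|_2^{2-m}\|x\|_1^{m-1}$. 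It then suffices to bound the scalar $\|x\|_2^{2-m}\|x\|_1^{m-1}$ by $n^{m/2}$ on $\{\|x\|_\infty=1\}$ and by $n^{(mp-2)/(2p)}$ on $\{\|x\|_p=1\}$. The first comes from Cauchy--Schwarz $\|x\|_1\le\sqrt n\,\|x\|_2$ and $\|x\|_2\le\sqrt n$; for the second I would split into $p\ge2$ and $1\le p\le2$ and apply, in each range, the standard comparisons $\|x\|_s\le\|x\|_r\le n^{1/r-1/s}\|x\|_s$ for $r\le s$ among the $\ell_1,\ell_2,\ell_p$ norms, both cases collapsing to the exponent $m/2-1/p=(mp-2)/(2p)$.

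It remains to sharpen these "$\le$" upper estimates into the strict "$<$" of the theorem. As $T_{\mathcal B}$ is continuous and the unit spheres compact, each operator norm is attained, so it is enough to argue strictness at every admissible $x$. The norm comparisons above are equalities only when all $|x_j|$ are equal, that is when $x$ is a scalar multiple of a sign vector; but for such an $x$ with $n\ge2$ at least one off-diagonal entry is present, and the strict diagonal dominance $|b_{ii_2\cdots i_m}|<b_{ii\cdots i}$ of Lemma \ref{lem21}(i) forces $\sum_{i_2,\dots,i_m}|b_{ii_2\cdots i_m}|<n^{m-1}b_{ii\cdots i}$ and hence makes the entrywise tensor estimate strict. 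Therefore on every $x$ at least one inequality in the chain is strict, which delivers the strict upper bounds and completes both parts.
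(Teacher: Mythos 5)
Your proposal follows essentially the same route as the paper: the lower bounds come from evaluating $T_{\mathcal B}$ at the ($\ell_p$-normalized) all-ones vector together with Lemma \ref{lem21} (ii), and the upper bounds from the entrywise estimate $|(\mathcal{B}x^{m-1})_i|\le b_{ii\cdots i}\|x\|_1^{m-1}$ of Lemma \ref{lem21} (i) combined with the standard comparisons among the $\ell_1,\ell_2,\ell_p,\ell_\infty$ norms. If anything your write-up is slightly more careful than the paper's, which applies $\|x\|_2\le n^{\frac12-\frac1p}\|x\|_p$ without your split into $p\ge 2$ and $1\le p\le 2$, and which asserts strictness at an intermediate step that degenerates to equality when $x$ is a coordinate vector, a case your compactness-plus-case-analysis argument handles explicitly.
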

\begin{proof}
(i)  Choose $e=(1,1,\cdots,1)^\top$. Then $\|e\|_\infty=1$ and  $\|e\|_2=n^{\frac12}$, and hence by Lemma \ref{lem21} (ii), we have $$
\aligned
\|T_{\mathcal{B}}(e)\|_{\infty}&= \max_{i\in [n]}\left|\|e\|_{2}^{2-m}\sum_{i_{2},\cdots,i_{m}=1}^{n}b_{ii_{2}\cdots i_{m}}\right|\\
&=n^{\frac{2-m}2}\max_{i\in [n]}\sum_{i_{2},\cdots,i_{m}=1}^{n}b_{ii_{2}\cdots i_{m}}\\
&>n^{\frac{2-m}2} \max_{i\in [n]}n^{m-1}\beta_i(\mathcal{B})\\
&= n^{\frac{m}{2}}\max_{i\in [n]}\beta_i(\mathcal{B}).
\endaligned
$$
Consequently, $$ n^{\frac{m}{2}}\max_{i\in [n]}\beta_i(\mathcal{B})<n^{\frac{2-m}2}\max\limits_{i\in [n]}\sum\limits_{i_{2},\cdots,i_{m}=1}^{n}b_{ii_{2}\cdots i_{m}}\leq\|T_{\mathcal{B}}\|_{\infty}.$$

Now we show the right inequality. It follows from Lemma \ref{lem21} (i) together with the fact that $\|x\|_{1}\leq n\|x\|_\infty$ and $\|x\|_{2}\leq \sqrt{n}\|x\|_{\infty}$ that
 $$
 \aligned
 \|T_{\mathcal{B}}\|_{\infty}&=\max_{\|x\|_{\infty}=1}\|T_{\mathcal{B}}(x)\|_{\infty}
 = \max_{\|x\|_{\infty}=1}\max_{i\in [n]}\left|\|x\|_{2}^{2-m}\sum_{i_{2},\cdots,i_{m}=1}^{n}b_{ii_{2}\cdots i_{m}}x_{i_{2}}x_{i_{3}}\cdots x_{i_{m}}\right|\\
 &\leq \max_{\|x\|_{\infty}=1} n^{\frac{2-m}2}\|x\|_{\infty}^{2-m}\max_{i\in [n]}\left(\sum_{i_{2},\cdots,i_{m}=1}^{n}|b_{ii_{2}\cdots i_{m}}||x_{i_{2}}||x_{i_{3}}|\cdots |x_{i_{m}}|\right)\\
 &<  n^{\frac{2-m}2}\max_{\|x\|_{\infty}=1}\|x\|_{\infty}^{2-m}\max_{i\in [n]}\left(b_{ii\cdots i}\sum_{i_{2},\cdots,i_{m}=1}^{n}|x_{i_{2}}||x_{i_{3}}|\cdots |x_{i_{m}}|\right)\\
 &=n^{\frac{2-m}2} \max_{\|x\|_{\infty}=1}\|x\|_{\infty}^{2-m}\max_{i\in [n]}\left(b_{ii\cdots i}\left(\sum_{k=1}^{n}|x_{k}|\right)^{m-1}\right)\\
 &=n^{\frac{2-m}2} \max_{\|x\|_{\infty}=1}\|x\|_{\infty}^{2-m}\max_{i\in [n]}(b_{ii\cdots i}\|x\|_{1}^{m-1})\\
 &\leq n^{\frac{2-m}2}\max_{\|x\|_{\infty}=1}\|x\|_{\infty}^{2-m}(n\|x\|_\infty)^{m-1}\max_{i\in [n]}b_{ii\cdots i}\\
 &= n^{\frac{m}{2}}\max_{i\in [n]}b_{ii\cdots i}.
 \endaligned
 $$
		
	(ii)  Choose $y=(n^{-\frac1p},n^{-\frac1p},\cdots,n^{-\frac1p})^\top$. Then $\|y\|_p=1$ and  $\|y\|_2=n^{\frac{p-2}{2p}}$, and hence  by Lemma \ref{lem21} (ii), we have $$
	\aligned
	\|T_{\mathcal{B}}(y)\|_p^p&= \sum_{i=1}^{n}\left|\|y\|_{2}^{2-m}\sum_{i_{2},\cdots,i_{m}=1}^{n}b_{ii_{2}\cdots i_{m}}(n^{-\frac1p})^{m-1}\right|^{p}\\
	&=n^{\frac{(p-2)(2-m)}{2}} \sum_{i=1}^{n} n^{1-m}\left(\sum_{i_{2},\cdots,i_{m}=1}^{n}b_{ii_{2}\cdots i_{m}}\right)^p\\
	&>n^{\frac{(p-2)(2-m)}{2}} \sum_{i=1}^{n} n^{1-m}\left(n^{m-1}\beta_i(\mathcal{B})\right)^p\\
	&= n^{\frac{mp-2}{2}} \sum_{i=1}^{n}\left(\beta_i(\mathcal{B})\right)^p.
	\endaligned
	$$
	Consequently, $$ n^{\frac{mp-2}{2p}} \left(\sum\limits_{i=1}^{n}\left(\beta_i(\mathcal{B})\right)^p\right)^\frac1p<n^{\frac{2p-pm-2}{2p}}\left( \sum_{i=1}^{n} \left(\sum_{i_{2},\cdots,i_{m}=1}^{n}b_{ii_{2}\cdots i_{m}}\right)^p\right)^\frac1p\leq\|T_{\mathcal{B}}\|_{p}.$$
	
Next we show the right inequality. By Lemma \ref{lem21} (i) together with the fact that  $\|x\|_{q}\leq \|x\|_{p}\leq n^{\frac{1}{p}-\frac{1}{q}}\|x\|_{q} \mbox{ for } q>p $, we obtain
$$
 \aligned
 \|T_{\mathcal{B}}\|_{p}^{p}&=\max_{\|x\|_{p}=1}\|T_{\mathcal{B}}(x)\|_{p}^{p}\\
 &= \max_{\|x\|_{p}=1}\sum_{i=1}^{n}\left|\|x\|_{2}^{2-m}\sum_{i_{2},\cdots,i_{m}=1}^{n}b_{ii_{2}\cdots i_{m}}x_{i_{2}}x_{i_{3}}\cdots x_{i_{m}}\right|^{p}\\
 &\leq \max_{\|x\|_{p}=1}\|x\|_{2}^{(2-m)p}\sum_{i=1}^{n}\left(\sum_{i_{2},\cdots,i_{m}=1}^{n}|b_{ii_{2}\cdots i_{m}}||x_{i_{2}}||x_{i_{3}}|\cdots |x_{i_{m}}|\right)^{p}\\
 &< \max_{\|x\|_{p}=1}\|x\|_{2}^{(2-m)p}\sum_{i=1}^{n}\left(b_{ii\cdots i}\sum_{i_{2},\cdots,i_{m}=1}^{n}|x_{i_{2}}||x_{i_{3}}|\cdots |x_{i_{m}}|\right)^{p}\\
 &= \max_{\|x\|_{p}=1}\|x\|_{2}^{(2-m)p}\sum_{i=1}^{n}\left(b_{ii\cdots i}\left(\sum_{k=1}^{n}|x_{k}|\right)^{m-1}\right)^{p}\\
 &= \max_{\|x\|_{p}=1}\|x\|_{2}^{(2-m)p}\sum_{i=1}^{n}(b_{ii\cdots i}\|x\|_{1}^{m-1})^{p}\\
 &\leq \max_{\|x\|_{p}=1}\|x\|_{2}^{(2-m)p}(\sqrt{n}\|x\|_{2})^{(m-1)p}\sum_{i=1}^{n}b_{ii\cdots i}^{p}\\
 &= \max_{\|x\|_{p}=1}n^{\frac{(m-1)p}{2}}\|x\|_{2}^{p}\sum_{i=1}^{n}b_{ii\cdots i}^{p}\\
 &\leq \max_{\|x\|_{p}=1}n^{\frac{(m-1)p}{2}}(n^{\frac{1}{2}-\frac{1}{p}}\|x\|_{p})^{p}\sum_{i=1}^{n}b_{ii\cdots i}^{p}\\
 &= n^{\frac{mp-2}{2}}\sum_{i=1}^{n}b_{ii\cdots i}^{p}.
 \endaligned$$
The desired conclusions follow.
\end{proof}

\begin{theorem}\label{thm42} Let $\mathcal{B}$ be an $m$-order $n$-dimensional $B$ tensor. If $m$ is even, then
	\begin{itemize}
		\item[(i)]  $ n\left(\beta_i(\mathcal{B})\right)^{\frac1{m-1}}<\max\limits_{i\in [n]}\left(\sum\limits_{i_{2},\cdots,i_{m}=1}^{n}b_{ii_{2}\cdots i_{m}}\right)^{\frac1{m-1}}\leq\|F_{\mathcal{B}}\|_{\infty}\leq\max\limits_{i\in [n]}\left(\sum\limits_{i_{2},\cdots,i_{m}=1}^{n}|b_{ii_{2}\cdots i_{m}}|\right)^{\frac1{m-1}}\\<n\max\limits_{i\in [n]}b_{ii\cdots i}^{\frac1{m-1}}$;
		\item[(ii)] $n^{\frac{p-1}{p}} \left(\sum\limits_{i=1}^{n}\left(\beta_i(\mathcal{B})\right)^{\frac{p}{m-1}}\right)^\frac1p<\dfrac1{\sqrt[p]{n}}\left(\sum\limits_{i=1}^{n} \left(\sum\limits_{i_{2},\cdots,i_{m}=1}^{n}b_{ii_{2}\cdots i_{m}}\right)^{\frac{p}{m-1}}\right)^{\frac1p}\leq\|F_{\mathcal{B}}\|_{p}<n^{\frac{p-1}{p}} \left(\sum\limits_{i=1}^{n}b_{ii\cdots i}^{\frac{p}{m-1}}\right)^\frac1p$ ($p\geq1).$
	\end{itemize}
\end{theorem}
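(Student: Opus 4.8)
The plan is to mirror the proof of Theorem \ref{thm41}, replacing the operator $T_{\mathcal{B}}$ by $F_{\mathcal{B}}$ and exploiting that, since $m$ is even, $m-1$ is odd, so the componentwise power $t\mapsto t^{1/(m-1)}$ is a well-defined, strictly increasing, sign-preserving bijection of $\mathbb{R}$. In particular $F_{\mathcal{B}}$ is continuous and positively homogeneous of degree one, so $\|F_{\mathcal{B}}\|_\infty=\max_{\|x\|_\infty=1}\|F_{\mathcal{B}}(x)\|_\infty$ and $\|F_{\mathcal{B}}\|_p=\max_{\|x\|_p=1}\|F_{\mathcal{B}}(x)\|_p$. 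For each lower bound I would evaluate $F_{\mathcal{B}}$ at an explicit unit vector, and for each upper bound I would estimate $\|F_{\mathcal{B}}(x)\|$ over the unit sphere and invoke Lemma \ref{lem21}.

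For the lower bounds I proceed exactly as in Theorem \ref{thm41}. In (i) I take $e=(1,\dots,1)^\top$, so $\|e\|_\infty=1$ and $F_{\mathcal{B}}(e)_i=(\sum_{i_2,\dots,i_m}b_{ii_2\cdots i_m})^{1/(m-1)}$, which is positive because $\mathcal{B}$ is a $B$ tensor; hence $\|F_{\mathcal{B}}\|_\infty\ge\|F_{\mathcal{B}}(e)\|_\infty=\max_i(\sum_{i_2,\dots,i_m}b_{ii_2\cdots i_m})^{1/(m-1)}$, and the strict inequality $n\,\beta_i(\mathcal{B})^{1/(m-1)}<\max_i(\dots)^{1/(m-1)}$ follows by raising the bound $\sum_{i_2,\dots,i_m}b_{ii_2\cdots i_m}>n^{m-1}\beta_i(\mathcal{B})$ of Lemma \ref{lem21}(ii) to the power $1/(m-1)$ and taking the maximum over $i$. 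In (ii) I take $y=(n^{-1/p},\dots,n^{-1/p})^\top$, so $\|y\|_p=1$ and $F_{\mathcal{B}}(y)_i=n^{-1/p}(\sum_{i_2,\dots,i_m}b_{ii_2\cdots i_m})^{1/(m-1)}$; summing the $p$-th powers gives $\|F_{\mathcal{B}}(y)\|_p=n^{-1/p}(\sum_i(\sum_{i_2,\dots,i_m}b_{ii_2\cdots i_m})^{p/(m-1)})^{1/p}$, and the left strict inequality again comes from Lemma \ref{lem21}(ii), raised to the power $p/(m-1)$, summed over $i$, and multiplied by $n^{-1/p}$.

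For the upper bound in (i) the first inequality $\|F_{\mathcal{B}}\|_\infty\le\max_i(\sum_{i_2,\dots,i_m}|b_{ii_2\cdots i_m}|)^{1/(m-1)}$ is exactly Lemma \ref{lem22}(ii), so only the last strict inequality needs work: since there are $n^{m-1}$ index tuples $(i_2,\dots,i_m)$, of which one is diagonal, Lemma \ref{lem21}(i) gives $\sum_{i_2,\dots,i_m}|b_{ii_2\cdots i_m}|<n^{m-1}b_{ii\cdots i}$, and raising to the power $1/(m-1)$ and taking the maximum over $i$ yields $n\max_i b_{ii\cdots i}^{1/(m-1)}$. For the upper bound in (ii), however, I would \emph{not} route through Lemma \ref{lem22}(iv): that route only produces the constant $n$, whereas the sharp constant $n^{(p-1)/p}$ requires a direct estimate. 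Concretely, for $\|x\|_p=1$ I bound $|(\mathcal{B}x^{m-1})_i|\le b_{ii\cdots i}(\sum_k|x_k|)^{m-1}=b_{ii\cdots i}\|x\|_1^{m-1}$ via Lemma \ref{lem21}(i), so that $\|F_{\mathcal{B}}(x)\|_p^p\le\|x\|_1^p\sum_i b_{ii\cdots i}^{p/(m-1)}$, and then apply the H\"older comparison $\|x\|_1\le n^{(p-1)/p}\|x\|_p=n^{(p-1)/p}$ to obtain $\|F_{\mathcal{B}}\|_p\le n^{(p-1)/p}(\sum_i b_{ii\cdots i}^{p/(m-1)})^{1/p}$; the same direct estimate with $\|x\|_1\le n\|x\|_\infty$ recovers the final bound in (i).

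The step I expect to be the main obstacle is justifying that the upper bounds are \emph{strict}. The term-by-term replacement $|b_{ii_2\cdots i_m}|\le b_{ii\cdots i}$ is an equality on vectors supported at a single coordinate (e.g. $x=e_i$ makes the diagonal term tight), so strictness cannot be read off that step alone. Instead I would argue that equality throughout the chain would force simultaneously $\|x\|_1=n^{(p-1)/p}\|x\|_p$ — which holds only when all coordinates of $x$ have equal modulus — and, for the maximizing row $i$, $|b_{ii_2\cdots i_m}|=b_{ii\cdots i}$ for every off-diagonal tuple carrying nonzero weight; since equal moduli make every tuple carry nonzero weight, the latter contradicts the strict domination $b_{ii\cdots i}>|b_{ii_2\cdots i_m}|$ of Lemma \ref{lem21}(i) (for $n\ge 2$). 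Hence at least one inequality in each chain is strict, which yields the desired ``$<$''.
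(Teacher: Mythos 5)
Your proposal matches the paper's proof essentially step for step: the same test vectors $e$ and $n^{-1/p}e$ combined with Lemma \ref{lem21}(ii) for the lower bounds, Lemma \ref{lem22}(ii) plus Lemma \ref{lem21}(i) for the $\infty$-norm upper bound, and the same direct estimate $\left|(\mathcal{B}x^{m-1})_i\right|< b_{ii\cdots i}\|x\|_1^{m-1}$ followed by $\|x\|_1\le n^{1-1/p}\|x\|_p$ (rather than Lemma \ref{lem22}(iv)) for the $p$-norm upper bound. Your closing justification of why the upper bounds are strict is a point the paper passes over silently when it writes ``$<$'' at the domination step, but it does not change the route.
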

\begin{proof}
	(i)  Choose $e=(1,1,\cdots,1)^\top$. Then $\|e\|_\infty=1$, and hence by Lemma \ref{lem21} (ii), we have $$
	\aligned
	\|F_{\mathcal{B}}(e)\|_{\infty}&= \max_{i\in [n]}\left|\sum_{i_{2},\cdots,i_{m}=1}^{n}b_{ii_{2}\cdots i_{m}}\right|^{\frac1{m-1}}\\
	&=\max_{i\in [n]}\left(\sum_{i_{2},\cdots,i_{m}=1}^{n}b_{ii_{2}\cdots i_{m}}\right)^{\frac1{m-1}}\\
	&>n \max_{i\in [n]}\left(\beta_i(\mathcal{B})\right)^{\frac1{m-1}}.
	\endaligned
	$$
	So, by the definition of the operator norm,  we have $$ n\max_{i\in [n]}\left(\beta_i(\mathcal{B})\right)^{\frac1{m-1}}<\max_{i\in [n]}\left(\sum_{i_{2},\cdots,i_{m}=1}^{n}b_{ii_{2}\cdots i_{m}}\right)^{\frac1{m-1}}\leq \|F_{\mathcal{B}}\|_{\infty}.$$
	From Lemma \ref{lem22} (ii) and \ref{lem21} (i), it follows that  $$\|F_{\mathcal{B}}\|_{\infty}\leq\max_{i\in [n]}\left(\sum_{i_{2},\cdots,i_{m}=1}^{n}|b_{ii_{2}\cdots i_{m}}|\right)^{\frac1{m-1}}<\max_{i\in [n]}\left(\sum_{i_{2},\cdots,i_{m}=1}^{n}b_{ii\cdots i}\right)^{\frac1{m-1}}= n\max_{i\in [n]}b_{ii\cdots i}^{\frac1{m-1}}.$$

	(ii)  Choose $y=(n^{-\frac1p},n^{-\frac1p},\cdots,n^{-\frac1p})^\top$. Then $\|y\|_p=1$, and hence  by Lemma \ref{lem21} (ii), we have $$
	\aligned
	\|F_{\mathcal{B}}(y)\|_p^p&= \sum_{i=1}^{n}\left|\sum_{i_{2},\cdots,i_{m}=1}^{n}b_{ii_{2}\cdots i_{m}}(n^{-\frac1p})^{m-1}\right|^{\frac{p}{m-1}}\\
	&=\sum_{i=1}^{n} \frac1n\left(\sum_{i_{2},\cdots,i_{m}=1}^{n}b_{ii_{2}\cdots i_{m}}\right)^{\frac{p}{m-1}}\\
	&>\frac1n\sum_{i=1}^{n} \left(n^{m-1}\beta_i(\mathcal{B})\right)^{\frac{p}{m-1}}\\
	&= n^{p-1} \sum_{i=1}^{n}\left(\beta_i(\mathcal{B})\right)^{\frac{p}{m-1}}.
	\endaligned
	$$
	Consequently, $$ n^{\frac{p-1}{p}} \left(\sum\limits_{i=1}^{n}\left(\beta_i(\mathcal{B})\right)^{\frac{p}{m-1}}\right)^\frac1p<\frac1{\sqrt[p]{n}}\left(\sum_{i=1}^{n} \left(\sum_{i_{2},\cdots,i_{m}=1}^{n}b_{ii_{2}\cdots i_{m}}\right)^{\frac{p}{m-1}}\right)^{\frac1p}\leq\|F_{\mathcal{B}}\|_{p}.$$
	
	Next we show the right inequality. By Lemma \ref{lem21} (i) together with the fact that  $ \|x\|_1\leq n^{1-\frac{1}{p}}\|x\|_p \mbox{ for } p>1 $, we obtain
	$$
	\aligned
	\|F_{\mathcal{B}}\|_{p}^{p}&=\max_{\|x\|_{p}=1}\|F_{\mathcal{B}}(x)\|_{p}^{p}= \max_{\|x\|_{p}=1}\sum_{i=1}^{n}\left|\sum_{i_{2},\cdots,i_{m}=1}^{n}b_{ii_{2}\cdots i_{m}}x_{i_{2}}x_{i_{3}}\cdots x_{i_{m}}\right|^{\frac{p}{m-1}}\\
	&< \max_{\|x\|_{p}=1}\sum_{i=1}^{n}\left(b_{ii\cdots i}\sum_{i_{2},\cdots,i_{m}=1}^{n}|x_{i_{2}}||x_{i_{3}}|\cdots |x_{i_{m}}|\right)^{\frac{p}{m-1}}\\
	&= \max_{\|x\|_{p}=1}\sum_{i=1}^{n}\left(b_{ii\cdots i}\left(\sum_{k=1}^{n}|x_{k}|\right)^{m-1}\right)^{\frac{p}{m-1}}= \max_{\|x\|_{p}=1}\|x\|_1^{p}\sum_{i=1}^{n}b_{ii\cdots i}^{\frac{p}{m-1}}\\
	&\leq \max_{\|x\|_{p}=1}(n^{1-\frac1{p}}\|x\|_p)^p\sum_{i=1}^{n}b_{ii\cdots i}^{\frac{p}{m-1}}\\&= n^{p-1}\sum_{i=1}^{n}b_{ii\cdots i}^{\frac{p}{m-1}}.
	\endaligned$$
		The desired conclusions follow.
\end{proof}

\begin{theorem}\label{thm43} Let $\mathcal{B}$ be an $m$-order $n$-dimensional $B_{0}$ tensor. Then
	\begin{itemize}
		\item[(i)]  $ n^{\frac{m}{2}}\max_{i\in [n]}\beta_i(\mathcal{B})\leq\|T_{\mathcal{B}}\|_{\infty}\leq n^{\frac{m}{2}}\max \limits_{i\in [n]}b_{ii\cdots i}$;
		\item[(ii)] $n^{\frac{mp-2}{2p}} \left(\sum\limits_{i=1}^{n}\left(\beta_i(\mathcal{B})\right)^p\right)^\frac1p\leq\|T_{\mathcal{B}}\|_{p}\leq n^{\frac{mp-2}{2p}}\left(\sum \limits_{i=1}^{n}b_{ii\cdots i}^{p}\right)^{\frac{1}{p}}$ if $p\geq1;$
		\item[(iii)]  $ n\left(\beta_i(\mathcal{B})\right)^{\frac1{m-1}}\leq\|F_{\mathcal{B}}\|_{\infty}\leq n\max\limits_{i\in [n]}b_{ii\cdots i}^{\frac1{m-1}}$ if $m$ is even;
		\item[(iv)] $n^{\frac{p-1}{p}} \left(\sum\limits_{i=1}^{n}\left(\beta_i(\mathcal{B})\right)^{\frac{p}{m-1}}\right)^\frac1p\leq\|F_{\mathcal{B}}\|_{p}\leq n^{\frac{p-1}{p}} \left(\sum\limits_{i=1}^{n}b_{ii\cdots i}^{\frac{p}{m-1}}\right)^\frac1p$   if $m$ is even and  $p\geq1.$
	\end{itemize}
\end{theorem}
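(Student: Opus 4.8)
The statement is the $B_0$ counterpart of Theorems \ref{thm41} and \ref{thm42}: parts (i)--(ii) mirror Theorem \ref{thm41}(i)--(ii) and parts (iii)--(iv) mirror Theorem \ref{thm42}(i)--(ii), the only structural change being that every strict inequality is relaxed to a non-strict one. Accordingly, the plan is to rerun the two-sided estimation of those proofs verbatim, with a single substitution: wherever the $B$-tensor form of Lemma \ref{lem21} was invoked, I would instead invoke its $B_0$-tensor form, in which (i), (ii) and (iii) hold with ``$\ge$'' in place of ``$>$''. This automatically converts each ``$<$'' produced in Theorems \ref{thm41}--\ref{thm42} into the ``$\le$'' demanded here, and it also explains why the middle terms $n^{(2-m)/2}\max_i\sum_{i_2,\dots,i_m}b_{ii_2\cdots i_m}$ and their $\ell_p$-analogues, which appear in Theorems \ref{thm41}--\ref{thm42}, need no longer be displayed: for a $B_0$ tensor they remain valid intermediate bounds that the statement simply does not record.

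For the lower bounds I would evaluate each operator norm at the same test vectors used before, namely $e=(1,\dots,1)^\top$ for the $\infty$-norms and $y=(n^{-1/p},\dots,n^{-1/p})^\top$ for the $p$-norms, and then use $\|T_\mathcal{B}\|\ge\|T_\mathcal{B}(e)\|$ (respectively $\ge\|T_\mathcal{B}(y)\|$) and likewise for $F_\mathcal{B}$. Two facts make the evaluation clean: first, the defining $B_0$ inequality $\sum_{i_2,\dots,i_m}b_{ii_2\cdots i_m}\ge0$ lets me discard the absolute values in $\|T_\mathcal{B}(e)\|_\infty$ and $\|F_\mathcal{B}(e)\|_\infty$; second, Lemma \ref{lem21}(ii) in its $B_0$ form gives $\sum_{i_2,\dots,i_m}b_{ii_2\cdots i_m}\ge n^{m-1}\beta_i(\mathcal{B})$. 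Carrying the normalisation factors ($\|e\|_2=n^{1/2}$, $\|y\|_2=n^{(p-2)/2p}$) through exactly as in Theorems \ref{thm41}--\ref{thm42} then produces $n^{m/2}\max_i\beta_i(\mathcal{B})$, $n^{(mp-2)/2p}(\sum_i\beta_i(\mathcal{B})^p)^{1/p}$, $n\max_i\beta_i(\mathcal{B})^{1/(m-1)}$ and $n^{(p-1)/p}(\sum_i\beta_i(\mathcal{B})^{p/(m-1)})^{1/p}$ respectively, now with ``$\le$''.

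For the upper bounds I would reproduce the norm-comparison chains of Theorems \ref{thm41}(i)--(ii) and \ref{thm42}(i)--(ii): expand the operator-norm definition, bound $|b_{ii_2\cdots i_m}|\le b_{ii\cdots i}$ by Lemma \ref{lem21}(i) in its $B_0$ form, collapse $\sum_{i_2,\dots,i_m}|x_{i_2}|\cdots|x_{i_m}|=\|x\|_1^{m-1}$, and then pass between the norms using the elementary comparisons among $\ell_1,\ell_2,\ell_p$ and $\ell_\infty$ (such as $\|x\|_1\le\sqrt n\,\|x\|_2$, $\|x\|_2\le\sqrt n\,\|x\|_\infty$ and $\|x\|_2\le n^{1/2-1/p}\|x\|_p$). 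Since Lemma \ref{lem21}(i) for $B_0$ is non-strict, the single strict step in the $B$-tensor proofs becomes non-strict, yielding the required ``$\le$'' upper bounds. For parts (iii) and (iv) I would additionally note that $m$ even makes $m-1$ odd, so $F_\mathcal{B}(x)=(\mathcal{B}x^{m-1})^{[1/(m-1)]}$ is well defined on all of $\mathbb{R}^n$ and Lemma \ref{lem22}(ii),(iv) apply exactly as in Theorem \ref{thm42}.

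The one point that genuinely needs checking --- and the only place where a $B_0$ tensor could in principle behave differently from a $B$ tensor --- is that the relaxations from ``$>$'' to ``$\ge$'' remain valid simultaneously at the sign step (nonnegativity of the row sums, needed to discard the absolute values in the test-vector evaluations) and at the diagonal-dominance step (Lemma \ref{lem21}(i),(ii) for $B_0$). Both are guaranteed by the last sentence of Lemma \ref{lem21}, so no new obstacle arises; the argument is a careful transcription of the two preceding proofs rather than a fresh computation.
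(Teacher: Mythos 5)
Your proposal is correct and matches the paper's (implicit) argument: the paper states Theorem \ref{thm43} without proof, relying on exactly the transcription you describe of the proofs of Theorems \ref{thm41} and \ref{thm42}, with the $B_0$ form of Lemma \ref{lem21} replacing the $B$ form so that every strict inequality relaxes to a non-strict one. Your explicit attention to the two places where nonnegativity is needed (dropping the absolute values on the row sums at the test vectors, and the non-strict diagonal dominance $|b_{ii_2\cdots i_m}|\le b_{ii\cdots i}$) covers the only points requiring care.
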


For a $B$ tensor $\mathcal{B}$, it is obvious that its upper bounds of the operator norm are simpler in form than ones of Lemma \ref{lem22}, but we cannot compare their size relationship. By constructing following two examples, we show that there exists  $B$ tensors such that the above upper bounds of $\|T_{\mathcal{B}}\|$ and $\|F_{\mathcal{B}}\|$ are smaller than the ones of Lemma \ref{lem22}.

\begin{example}\label{exa41} Let $\mathcal{B}=(b_{i_{1}i_{2}i_{3}i_{4}})\in S_{4,3}$, where $b_{1111}=b_{3333}=6$, $b_{2222}=5$, $b_{1333}=b_{3133}=b_{3313}=b_{3331}=1$, $b_{2322}=b_{2232}=b_{2223}=b_{3222}=1.5$ and all other $b_{i_{1}i_{2}i_{3}i_{4}}=2$. Then  $\mathcal{B}$ is a B tensor.
\end{example}
\begin{proof} It is obvious that $\sum \limits_{i_{2},i_{3},i_{4}=1}^3b_{ii_{2}i_{3}i_{4}}>0$ for $i=1,2,3.$
	
For $i=1$, we have
$$
\frac{1}{n^{m-1}}\left(\sum \limits_{i_{2},i_{3},i_{4}=1}^3b_{1i_{2}i_{3}i_{4}}\right)= \frac{1}{3^{4-1}}\times 57
= \frac{57}{27}>2,
$$
and hence,
$\frac{1}{n^{m-1}}\left(\sum\limits_{i_{2},i_{3},i_{4}=1}^3b_{1i_{2}i_{3}i_{4}}\right)>b_{1j_{2}j_{3}j_{4}}$ for all $(j_{2},j_{3},j_{4})\neq (1,1,1)$.

For $i=2$, we have
$$
\frac{1}{n^{m-1}}\left(\sum \limits_{i_{2},i_{3},i_{4}=1}^3b_{2i_{2}i_{3}i_{4}}\right)= \frac{1}{3^{4-1}}\times 55.5= \frac{55.5}{27}>2,
$$
 and so,
$\frac{1}{n^{m-1}}\left(\sum\limits_{i_{2},i_{3},i_{4}=1}^{n}b_{2i_{2}i_{3}i_{4}}\right)>b_{2j_{2}j_{3}j_{4}}$ for all $(j_{2},j_{3},j_{4})\neq (2,2,2)$.

For $i=3$, we have
$$
\frac{1}{n^{m-1}}\left(\sum \limits_{i_{2},i_{3},i_{4}=1}^3b_{3i_{2}i_{3}i_{4}}\right)= \frac{1}{3^{4-1}}\times 54.5= \frac{54.5}{27}>2,
$$
and so,
$\frac{1}{n^{m-1}}\left(\sum\limits_{i_{2},i_{3},i_{4}=1}^{n}b_{3i_{2}i_{3}i_{4}}\right)>b_{3j_{2}j_{3}j_{4}}$ for all $(j_{2},j_{3},j_{4})\neq (3,3,3)$.

Thus, $\mathcal{B}$ is a $B$ tensor.

 Clearly, for the upper bounds of $\|T_{\mathcal{B}}\|_\infty$, we  have
$$n^{\frac{m}{2}}\max \limits_{i\in [n]}b_{iiii}=3^{\frac{4}{2}}\times 6=54\mbox{ and }
\max \limits_{i\in [n]}\left(\sum_{i_{2},i_{3},i_{4}=1}^{n}|b_{ii_{2}i_{3}i_{4}}|\right)=57,$$
 and hence, $n^{\frac{m}{2}}\max \limits_{i\in [n]}b_{iiii}< \max \limits_{i\in [n]}\left(\sum \limits_{i_{2},i_{3},i_{4}=1}^{n}|b_{ii_{2}i_{3}i_{4}}|\right)$.

 It is obvious that for the upper bounds of $\|F_{\mathcal{B}}\|_p$ and $p=1$, $$3^{\frac{p-1}{p}} \left(\sum\limits_{i=1}^{3}b_{iii i}^{\frac{p}{m-1}}\right)^\frac1p<\left(\sum\limits_{i=1}^{3} \left(\sum\limits_{i_{2},i_3,i_4=1}^{3}b_{ii_{2}i_3 i_4}\right)^{\frac{p}{m-1}}\right)^{\frac1p}.$$

\end{proof}

\begin{example}\label{exa42} Let $\mathcal{B}=(b_{i_{1}i_{2}i_{3}i_{m}})\in S_{4,4}$, where $b_{1111}=b_{2222}=b_{3333}=b_{4444}=3$, $b_{1444}=b_{4144}=b_{4414}=b_{4441}=0.7$, $b_{2333}=b_{3233}=b_{3323}=b_{3332}=0.5$ and all other $b_{i_{1}i_{2}i_{3}i_{4}}=1$.
\end{example}
\begin{proof} It is obvious that $\sum \limits_{i_{2},i_{3},i_{4}=1}^4b_{ii_{2}i_{3}i_{4}}>0$ for $i=1,2,3,4.$

For $i=1$,
$$
\frac{1}{n^{m-1}}\left(\sum \limits_{i_{2},i_{3},i_{4}=1}^{n}b_{1i_{2}i_{3}i_{4}}\right)= \frac{1}{4^{4-1}}\times 65.7
= \frac{65.7}{64}>1.
$$
So we have
$\frac{1}{n^{m-1}}\left(\sum\limits_{i_{2},i_{3},i_{4}=1}^{n}b_{1i_{2}i_{3}i_{4}}\right)>b_{1j_{2}j_{3}j_{4}}$ for all $(j_{2},j_{3},j_{4})\neq (1,1,1)$.

For $i=2$,
$$
\frac{1}{n^{m-1}}\left(\sum \limits_{i_{2},i_{3},i_{4}=1}^{n}b_{2i_{2}i_{3}i_{4}}\right)= \frac{1}{4^{4-1}}\times 65.5= \frac{65.5}{64}>1.
$$
So we have
$\frac{1}{n^{m-1}}\left(\sum\limits_{i_{2},i_{3},i_{4}=1}^{n}b_{2i_{2}i_{3}i_{4}}\right)>b_{2j_{2}j_{3}j_{4}}$ for all $(j_{2},j_{3},j_{4})\neq (2,2,2)$.

For $i=3$,
$$
\frac{1}{n^{m-1}}\left(\sum \limits_{i_{2},i_{3},i_{4}=1}^{n}b_{3i_{2}i_{3}i_{4}}\right)= \frac{1}{4^{4-1}}\times 64.5= \frac{64.5}{64}>1.
$$
So we have
$\frac{1}{n^{m-1}}\left(\sum\limits_{i_{2},i_{3},i_{4}=1}^{n}b_{3i_{2}i_{3}i_{4}}\right)>b_{3j_{2}j_{3}j_{4}}$ for all $(j_{2},j_{3},j_{4})\neq (3,3,3)$.

For $i=4$
$$
\frac{1}{n^{m-1}}\left(\sum \limits_{i_{2},i_{3},i_{4}=1}^{n}b_{4i_{2}i_{3}i_{4}}\right)= \frac{1}{4^{4-1}}\times 65.1= \frac{65.1}{64}>1.
$$
So we have
$\frac{1}{n^{m-1}}\left(\sum\limits_{i_{2},i_{3},i_{4}=1}^{n}b_{4i_{2}i_{3}i_{4}}\right)>b_{4j_{2}j_{3}j_{4}}$ for all $(j_{2},j_{3},j_{4})\neq (4,4,4)$.

Therefore, $\mathcal{B}$ is a $B$ tensor.   It is obvious that for the upper bounds of $\|T_{\mathcal{B}}\|_p$,
$$n^{\frac{mp-2}{2p}}\left(\sum_{i=1}^{n}b_{iiii}^{p}\right)^{\frac{1}{p}}=4^{\frac{4p-2}{2p}}\times (3^{p}\times 4)^{\frac{1}{p}}=4^{\frac{2p-1}{p}}\times 3\times 4^{\frac{1}{p}}=48$$ and
$$
\aligned
n^{\frac{m-2}{p}}\left(\sum_{i=1}^{n}\left(\sum_{i_{2},i_{3},i_{4}=1}^{n}|b_{ii_{2}i_{3}i_{4}}|\right)^{p}\right)^{\frac{1}{p}}&=4^{\frac{4-2}{p}}\times(65.7^{p}+65.5^{p}+64.5^{p}+65.1^{p})^{\frac{1}{p}}\\
&>4^{\frac{4-2}{p}}\times(64^{p}\times 4)^{\frac{1}{p}}\\
&=64\times 4^{\frac{3}{p}}.
\endaligned
$$
Since $4^{\frac{3}{p}}>1$,  we have $n^{\frac{mp-2}{2p}}\left(\sum \limits_{i=1}^{n}b_{iiii}^{p}\right)^{\frac{1}{p}}< n^{\frac{m-2}{p}}\left(\sum \limits_{i=1}^{n}\left(\sum \limits_{i_{2},i_{3},i_{4}=1}^{n}|b_{ii_{2}i_{3}i_{4}}|\right)^{p}\right)^{\frac{1}{p}}$.

\end{proof}

\begin{theorem}\label{thm44} Let $\mathcal{B}$ be a  $B$ tensor. Then
\begin{itemize}
	\item[(i)] $|\lambda|  < \left(\sum\limits_{i=1}^{n}b_{ii\cdots i}^{\frac{1}{m-1}}\right)^{m-1}$  for all eigenvalues $\lambda$ of  $\mathcal{B}$ if $m$ is even;
	\item[(ii)] $|\mu|< n^{\frac{m}{2}}\min\left\{\max \limits_{i \in [n]}b_{ii\cdots i},\dfrac1{n}\sum\limits_{i=1}^nb_{ii\cdots i}\right\}$  for all $E$-eigenvalues $\mu$ of $\mathcal{B}$.
\end{itemize}
\end{theorem}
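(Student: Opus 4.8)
The plan is to reduce both spectral bounds to the operator-norm estimates already obtained in Theorems \ref{thm41} and \ref{thm42}, by observing that each eigen-equation of Definition \ref{defn22} collapses into an honest eigen-relation for one of the positively homogeneous operators $T_{\mathcal B}$ or $F_{\mathcal B}$. Once a scalar $\nu$ satisfies $T_{\mathcal B}(x)=\nu x$ (resp. $F_{\mathcal B}(x)=\nu x$) for some $x\ne 0$, positive homogeneity gives $|\nu|=\|T_{\mathcal B}(x/\|x\|)\|\le\|T_{\mathcal B}\|$ in \emph{every} norm, so the whole theorem follows by inserting the sharpest available norm bound.

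For (ii), let $\mu$ be an $E$-eigenvalue with eigenvector $x$; I first treat the real ($Z$-eigenvalue) case, where $x^{\top}x=\|x\|_2^2$. Then the defining relation $\mathcal B x^{m-1}=\mu x(x^{\top}x)^{\frac{m-2}{2}}$ becomes $\mathcal B x^{m-1}=\mu\|x\|_2^{m-2}x$, so by \eqref{TA} we get $T_{\mathcal B}(x)=\|x\|_2^{2-m}\mathcal B x^{m-1}=\mu x$. Normalizing $x$, we obtain $|\mu|\le\|T_{\mathcal B}\|_\infty$ and $|\mu|\le\|T_{\mathcal B}\|_1$ simultaneously. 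Applying Theorem \ref{thm41}(i) to the first and Theorem \ref{thm41}(ii) with $p=1$ to the second, and using $n^{\frac{m-2}{2}}\sum_i b_{ii\cdots i}=n^{\frac m2}\cdot\frac1n\sum_i b_{ii\cdots i}$, the smaller of the two right-hand sides is exactly the stated minimum.

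For (i), assume $m$ even, so $m-1$ is odd and $t\mapsto t^{\frac1{m-1}}$ is an odd, multiplicative bijection of $\mathbb R$. For a real $H$-eigenvalue $\lambda$ with real eigenvector $x$, the equation $\mathcal B x^{m-1}=\lambda x^{[m-1]}$ reads coordinatewise $(\mathcal B x^{m-1})_i=\lambda x_i^{m-1}$, whence by \eqref{FA}, $\bigl(F_{\mathcal B}(x)\bigr)_i=(\lambda x_i^{m-1})^{\frac1{m-1}}=\lambda^{\frac1{m-1}}x_i$; that is $F_{\mathcal B}(x)=\lambda^{\frac1{m-1}}x$. Hence $|\lambda|^{\frac1{m-1}}\le\|F_{\mathcal B}\|_1$, and Theorem \ref{thm42}(ii) with $p=1$ gives $\|F_{\mathcal B}\|_1<\sum_i b_{ii\cdots i}^{\frac1{m-1}}$; raising to the power $m-1$ yields the claim.

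The main obstacle is that Definition \ref{defn22} permits $\lambda,\mu\in\mathbb C$ with complex eigenvectors, for which the clean identities above break down, since $\|x\|_2$ is built from moduli while $x^{[m-1]}$ and $x^{\top}x$ are not. The remedy I plan for (i) is to pass to the modulus vector $u=(|x_1|,\dots,|x_n|)\ge 0$, $u\ne 0$: taking absolute values in the eigen-equation and using the triangle inequality gives $|\lambda|\,u_i^{m-1}=\bigl|(\mathcal B x^{m-1})_i\bigr|\le\sum_{i_2,\dots,i_m}|b_{ii_2\cdots i_m}|\,u_{i_2}\cdots u_{i_m}$, and Lemma \ref{lem21}(i), i.e. $b_{ii\cdots i}>|b_{ii_2\cdots i_m}|$, bounds the right side by $b_{ii\cdots i}\|u\|_1^{m-1}$ exactly as in the proof of the right inequality of Theorem \ref{thm42}; extracting the $(m-1)$-th root and summing over $i$ reproduces $|\lambda|^{\frac1{m-1}}\le\sum_i b_{ii\cdots i}^{\frac1{m-1}}$. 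The delicate point is strictness, since in degenerate directions (e.g. $u$ a coordinate vector) the estimate $|\lambda|\le b_{ii\cdots i}$ is only weak; to retain ``$<$'' I will close with the elementary comparison $b_{ii\cdots i}\le\bigl(\sum_j b_{jj\cdots j}^{\frac1{m-1}}\bigr)^{m-1}$, strict once $n\ge 2$. A similar modulus device can be attempted for complex $E$-eigenvalues in (ii), but there the factor $|x^{\top}x|^{\frac{m-2}{2}}$ may vanish or shrink, so I expect that case (together with strictness) to be where the genuine work lies.
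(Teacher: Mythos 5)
Your reduction is exactly the paper's: both parts are obtained by feeding the norm bounds of Theorems \ref{thm41} and \ref{thm42} (at $p=\infty$ and $p=1$) into the inequality ``eigenvalue modulus $\le$ operator norm'' for the positively homogeneous operators $T_{\mathcal B}$ and $F_{\mathcal B}$, and your bookkeeping is right: $n^{\frac{m-2}{2}}\sum_i b_{ii\cdots i}=n^{\frac m2}\cdot\frac1n\sum_i b_{ii\cdots i}$ gives the stated minimum in (ii), and raising $\|F_{\mathcal B}\|_1<\sum_i b_{ii\cdots i}^{\frac1{m-1}}$ to the power $m-1$ gives (i). The one place you stall --- complex eigenvectors, for which $x^{[m-1]}$ and $x^\top x$ are not controlled by $\|x\|_2$ --- is precisely what the paper does not reprove: it observes that $\lambda$ is an eigenvalue of $\mathcal B$ iff $\lambda^{\frac1{m-1}}$ is an eigenvalue of $F_{\mathcal B}$, and $\mu$ is an $E$-eigenvalue iff $\mu$ is an eigenvalue of $T_{\mathcal B}$, and then cites Theorem 4.2 of Song and Qi \cite{SQ2013} (the same source as Lemma \ref{lem22}) for the bound $|\nu|\le\|T\|_p$ valid for all eigenvalues of such operators, not only the real ones. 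You should invoke that result rather than rebuild it. That said, your modulus-vector argument for (i) is a sound self-contained substitute: $|\lambda|\,u_i^{m-1}\le b_{ii\cdots i}\|u\|_1^{m-1}$ via Lemma \ref{lem21}(i), take $(m-1)$-th roots, sum over $i$, and cancel $\|u\|_1$; strictness for $n\ge 2$ follows from positivity of the diagonal entries as you indicate. For (ii), however, the complex $E$-eigenvalue case is left genuinely open in your write-up, so as it stands your proposal proves the theorem only for $Z$-eigenvalues; the citation to \cite{SQ2013} is what closes that gap in the paper.
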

\begin{proof}
	(i) From the definition of the operator $F_\mathcal{B}$, it follows that $\lambda$ is an eigenvalue of $\mathcal{B}$ if and only if $\lambda^{\frac1{m-1}}$ is an eigenvalue of $F_\mathcal{B}$. By Theorem 4.2 of Song and Qi \cite{SQ2013}, we have
	$$|\lambda|^{\frac1{m-1}}\leq\|F_\mathcal{B}\|_1,\mbox{ i.e., } |\lambda|\leq\|F_\mathcal{B}\|_1^{m-1}.$$
By Theorem \ref{thm42} (ii), we have $$\|F_{\mathcal{B}}\|_1< \sum\limits_{i=1}^{n}b_{ii\cdots i}^{\frac{1}{m-1}}.$$	
So we obtain $$|\lambda|\leq\|F_\mathcal{B}\|_1^{m-1}<\left(\sum\limits_{i=1}^{n}b_{ii\cdots i}^{\frac{1}{m-1}}\right)^{m-1}.$$

(ii) From the definition of the operator $T_\mathcal{B}$, it follows that $\mu$ is an $E$-eigenvalue of $\mathcal{B}$ if and only if $\mu$ is an eigenvalue of $T_\mathcal{B}$. By Theorem 4.2 of Song and Qi \cite{SQ2013}, we have $$|\mu|\leq \|T_\mathcal{B}\|_\infty\mbox{ and }|\mu|\leq \|T_\mathcal{B}\|_1.$$
By Theorem \ref{thm41}, we obtain $$\|T_\mathcal{B}\|_\infty<n^{\frac{m}{2}}\max \limits_{i \in [n]}b_{ii\cdots i}\mbox{ and } \|T_\mathcal{B}\|_1<n^{\frac{m-2}{2}}\sum\limits_{i=1}^nb_{ii\cdots i}.$$ Then we have $$|\mu|<n^{\frac{m}{2}}\max \limits_{i \in [n]}b_{ii\cdots i}\mbox{ and } |\mu|<n^{\frac{m-2}{2}}\sum\limits_{i=1}^nb_{ii\cdots i}.$$ So the desired conclusion follows.
\end{proof}
Similarly, we easily show the following.

\begin{theorem}\label{thm45} Let $\mathcal{B}$ be a  $B_0$ tensor. Then
	\begin{itemize}
		\item[(i)] $|\lambda|  \leq  \left(\sum\limits_{i=1}^{n}b_{ii\cdots i}^{\frac{1}{m-1}}\right)^{m-1}$  for all eigenvalues $\lambda$ of  $\mathcal{B}$ if $m$ is even;
		\item[(ii)] $|\mu|\leq n^{\frac{m}{2}}\min\left\{\max \limits_{i \in [n]}b_{ii\cdots i},\dfrac1{n}\sum\limits_{i=1}^nb_{ii\cdots i}\right\}$  for all $E$-eigenvalues $\mu$ of $\mathcal{B}$.
	\end{itemize}
\end{theorem}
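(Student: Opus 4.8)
The plan is to follow the proof of Theorem \ref{thm44} essentially verbatim, replacing every appeal to the strict operator-norm bounds of Theorems \ref{thm41} and \ref{thm42} by the corresponding non-strict bounds of Theorem \ref{thm43}, which are exactly the $B_0$ analogues. The inequalities ``$<$'' of Theorem \ref{thm44} degrade to ``$\leq$'' precisely because a $B_0$ tensor satisfies the conclusions of Lemma \ref{lem21} only with ``$\geq$'', so the sharp strict estimates are no longer available; everything else in the argument is structural and carries over unchanged.

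For part (i), since $m$ is even the operator $F_\mathcal{B}$ is well defined, and by its definition $\lambda$ is an eigenvalue of $\mathcal{B}$ if and only if $\lambda^{1/(m-1)}$ is an eigenvalue of $F_\mathcal{B}$. Invoking Theorem 4.2 of Song and Qi \cite{SQ2013}, which bounds the modulus of any eigenvalue of a positively homogeneous operator by its $1$-norm, gives $|\lambda|^{1/(m-1)}\leq\|F_\mathcal{B}\|_1$, i.e.\ $|\lambda|\leq\|F_\mathcal{B}\|_1^{m-1}$. It then suffices to specialise Theorem \ref{thm43} (iv) to $p=1$, which yields $\|F_\mathcal{B}\|_1\leq\sum_{i=1}^n b_{ii\cdots i}^{1/(m-1)}$, and to raise both sides to the power $m-1$.

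For part (ii), by the definition of $T_\mathcal{B}$ an $E$-eigenvalue $\mu$ of $\mathcal{B}$ is exactly an eigenvalue of $T_\mathcal{B}$, so the same result of Song and Qi \cite{SQ2013} delivers the two estimates $|\mu|\leq\|T_\mathcal{B}\|_\infty$ and $|\mu|\leq\|T_\mathcal{B}\|_1$. I would then insert Theorem \ref{thm43} (i) to obtain $\|T_\mathcal{B}\|_\infty\leq n^{m/2}\max_{i\in[n]}b_{ii\cdots i}$, and Theorem \ref{thm43} (ii) at $p=1$ to obtain $\|T_\mathcal{B}\|_1\leq n^{(m-2)/2}\sum_{i=1}^n b_{ii\cdots i}=n^{m/2}\cdot\frac1n\sum_{i=1}^n b_{ii\cdots i}$. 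Taking the smaller of the two upper bounds produces the stated minimum.

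Since every ingredient is already in place, there is no genuine obstacle; the only point requiring care is bookkeeping the exponents when setting $p=1$ in Theorem \ref{thm43} (ii) and (iv), and checking that $n^{(m-2)/2}\sum_i b_{ii\cdots i}$ really equals $n^{m/2}\cdot\frac1n\sum_i b_{ii\cdots i}$, so that the two bounds in (ii) can be combined into a single $\min$. One should also note that the eigenvalue and $E$-eigenvalue correspondences used for $F_\mathcal{B}$ and $T_\mathcal{B}$, together with the norm-bounds-the-spectrum inequality of \cite{SQ2013}, hold for an arbitrary tensor and therefore transfer unchanged to the $B_0$ setting.
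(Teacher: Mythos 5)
Your proposal is correct and matches the paper's intended argument exactly: the paper dispatches Theorem \ref{thm45} with the single remark ``Similarly, we easily show the following,'' meaning precisely the substitution you carry out --- repeat the proof of Theorem \ref{thm44}, using the non-strict $B_0$ bounds of Theorem \ref{thm43} (at $p=1$ for the $\|\cdot\|_1$ estimates) in place of the strict bounds of Theorems \ref{thm41} and \ref{thm42}. Your exponent bookkeeping ($n^{(p-1)/p}=1$ and $n^{(mp-2)/(2p)}=n^{(m-2)/2}=n^{m/2}\cdot\frac1n$ at $p=1$) is also correct.
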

	
For the tensor complementarity problem with a B tensor $\mathcal{B}$, denoted by $TCP(\mathcal{B},q)$, we may show the bound of solution set of $TCP(\mathcal{B},q)$.

\begin{theorem} \label{thm46}
Let $\mathcal{B}$ be a $B$ tensor. Assume that $x$ be a non-zero
solution of $TCP(\mathcal{B},q)$ and $y_{+}=(\max\{y_{1},0\},\max\{y_{2},0\},\cdots,\max\{y_{n},0\})^\top$. Then
\begin{itemize}
	\item[(i)] $\dfrac{\|(-q)_+\|_\infty}{n^{m-1}\max \limits_{i\in [n]}b_{ii\cdots i}}<\|x\|_\infty^{m-1}$;
	\item[(ii)] $\dfrac{\|(-q)_+\|_2}{n^{\frac{m-1}{2}}\left(\sum \limits_{i=1}^{n}b_{ii\cdots i}^{2}\right)^{\frac{1}{2}}}<\|x\|_2^{m-1}$;
	\item[(iii)] $\dfrac{\|(-q)_+\|_m}{n^{\frac{(m-1)^2}{m}}\left(\sum \limits_{i=1}^{n}b_{ii\cdots i}^{\frac{m}{m-1}}\right)^{\frac{m-1}{m}}}<\|x\|_m^{m-1}$ if $m$ is even.
\end{itemize}
\end{theorem}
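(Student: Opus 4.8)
The starting point is to unpack what it means for $x$ to solve $TCP(\mathcal{B},q)$. From \eqref{TCP} we have $x\ge 0$ and $q+\mathcal{B}x^{m-1}\ge 0$, the latter giving the componentwise inequality $\mathcal{B}x^{m-1}\ge -q$. Since the positive-part map $t\mapsto\max\{t,0\}$ is monotone nondecreasing, applying it coordinatewise yields $(-q)_+\le \left(\mathcal{B}x^{m-1}\right)_+$, with both sides nonnegative. Because every $\ell_p$ norm (and $\ell_\infty$) is monotone on the nonnegative orthant and $\left|\left(\mathcal{B}x^{m-1}\right)_{+,i}\right|\le\left|\left(\mathcal{B}x^{m-1}\right)_i\right|$, I obtain for every admissible norm the single workhorse estimate $\|(-q)_+\|\le\|(\mathcal{B}x^{m-1})_+\|\le\|\mathcal{B}x^{m-1}\|$. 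The whole proof then reduces to bounding $\|\mathcal{B}x^{m-1}\|$ from above in terms of $\|x\|$ and the diagonal entries $b_{ii\cdots i}$, which is exactly where the operator-norm machinery of Section 4 enters. Note that $x\ne 0$ guarantees $\|x\|>0$, so the $x\ne 0$ branch of $T_{\mathcal{B}}$ in \eqref{TA} applies and the strict inequalities survive.

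For (i) and (ii) I would use $T_{\mathcal{B}}$. Since $\mathcal{B}x^{m-1}=\|x\|_2^{m-2}T_{\mathcal{B}}(x)$ and $T_{\mathcal{B}}$ is positively homogeneous of degree one, $\|T_{\mathcal{B}}(x)\|\le\|T_{\mathcal{B}}\|\,\|x\|$. For (ii) everything stays in $\ell_2$: $\|(-q)_+\|_2\le\|x\|_2^{m-2}\|T_{\mathcal{B}}\|_2\|x\|_2=\|T_{\mathcal{B}}\|_2\|x\|_2^{m-1}$, and Theorem \ref{thm41}(ii) with $p=2$ gives $\|T_{\mathcal{B}}\|_2<n^{\frac{m-1}2}\left(\sum_i b_{ii\cdots i}^2\right)^{1/2}$, which is precisely the claimed bound. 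For (i) I additionally convert $\|x\|_2\le\sqrt n\,\|x\|_\infty$, so $\|x\|_2^{m-2}\le n^{\frac{m-2}2}\|x\|_\infty^{m-2}$; combining with $\|T_{\mathcal{B}}(x)\|_\infty\le\|T_{\mathcal{B}}\|_\infty\|x\|_\infty$ and the bound $\|T_{\mathcal{B}}\|_\infty<n^{\frac m2}\max_i b_{ii\cdots i}$ from Theorem \ref{thm41}(i) produces $\|(-q)_+\|_\infty<n^{\frac{m-2}2}\cdot n^{\frac m2}\max_i b_{ii\cdots i}\,\|x\|_\infty^{m-1}=n^{m-1}\max_i b_{ii\cdots i}\,\|x\|_\infty^{m-1}$. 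Strictness is inherited directly from the strict operator-norm estimates of Theorem \ref{thm41}.

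Part (iii) is the delicate one and needs $F_{\mathcal{B}}$ (hence the hypothesis $m$ even, so that $m-1$ is odd and the coordinatewise $(m-1)$-st root is well defined and monotone). Writing $(\mathcal{B}x^{m-1})_i=F_{\mathcal{B}}(x)_i^{\,m-1}$, the workhorse estimate gives coordinatewise $\left((-q)_{+,i}\right)^m\le\left((\mathcal{B}x^{m-1})_{+,i}\right)^m\le|F_{\mathcal{B}}(x)_i|^{m(m-1)}$, whence $\|(-q)_+\|_m\le\|F_{\mathcal{B}}(x)\|_{m(m-1)}^{\,m-1}$. The main obstacle is the exponent bookkeeping: this naturally produces the $\ell_{m(m-1)}$ norm rather than the $\ell_m$ norm. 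I resolve it by the norm monotonicity $\|F_{\mathcal{B}}(x)\|_{m(m-1)}\le\|F_{\mathcal{B}}(x)\|_m$ (valid since $m(m-1)\ge m$), after which $\|F_{\mathcal{B}}(x)\|_m\le\|F_{\mathcal{B}}\|_m\|x\|_m$ together with Theorem \ref{thm42}(ii) at $p=m$, namely $\|F_{\mathcal{B}}\|_m<n^{\frac{m-1}m}\left(\sum_i b_{ii\cdots i}^{m/(m-1)}\right)^{1/m}$, gives after raising to the power $m-1$ exactly the factor $n^{(m-1)^2/m}\left(\sum_i b_{ii\cdots i}^{m/(m-1)}\right)^{(m-1)/m}$ appearing in the statement. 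The crux is checking that $p=m$ is the correct choice, so that $p/(m-1)=m/(m-1)$ matches the exponent inside the sum, and that both norm changes run in the favorable direction; the remaining algebra is routine.
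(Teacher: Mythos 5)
Your proposal is correct and follows essentially the same route as the paper: the paper simply cites Theorem 5 of Song and Qi \cite{SQ5} for the intermediate inequalities $\|(-q)_+\|_\infty \le n^{\frac{m-2}{2}}\|T_{\mathcal{B}}\|_\infty\|x\|_\infty^{m-1}$, $\|(-q)_+\|_2\le\|T_{\mathcal{B}}\|_2\|x\|_2^{m-1}$ and $\|(-q)_+\|_m\le\|F_{\mathcal{B}}\|_m^{m-1}\|x\|_m^{m-1}$, which you rederive directly from the feasibility condition $q+\mathcal{B}x^{m-1}\ge 0$, and then both arguments conclude identically by substituting the strict upper bounds on $\|T_{\mathcal{B}}\|_\infty$, $\|T_{\mathcal{B}}\|_2$ and $\|F_{\mathcal{B}}\|_m$ from Theorems \ref{thm41} and \ref{thm42}. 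Your exponent bookkeeping in part (iii), including the detour through the $\ell_{m(m-1)}$ norm and the monotonicity $\|\cdot\|_{m(m-1)}\le\|\cdot\|_{m}$, checks out.
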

\begin{proof} Theorem \ref{thm31} implies that each $B$ tensor is strictly semi-positive. From Theoorem 5 of Song and Qi \cite{SQ5}, it follows that  \begin{equation}\label{eq41}
	\frac{\|(-q)_+\|_\infty}{ n^{\frac{m-2}2}\|T_{\mathcal{A}}\|_\infty}\leq\|x\|_\infty^{m-1}\mbox{ and }\frac{\|(-q)_+\|_2}{ \|T_{\mathcal{A}}\|_2}\leq\|x\|_2^{m-1}.
	\end{equation}		
By  Theorem \ref{thm41}, we have \begin{equation}\label{eq42}\|T_{\mathcal{A}}\|_\infty<n^{\frac{m}2}\max \limits_{i\in [n]}b_{ii\cdots i}\mbox{ and }\|T_{\mathcal{A}}\|_2<n^{\frac{m-1}{2}}\left(\sum \limits_{i=1}^{n}b_{ii\cdots i}^{2}\right)^{\frac{1}{2}}.\end{equation}
 Then combining  the  inequalities \eqref{eq41} and \eqref{eq42}, the conclusions (i) and (ii) are proved.

Similarly, we may show (iii). In fact, if $m$ is even, Theoorem 5 of Song and Qi \cite{SQ5} implies  \begin{equation}\label{eq43}\frac{\|(-q)_+\|_m}{ \|F_{\mathcal{A}}\|_m^{m-1}}\leq\|x\|_m^{m-1}.\end{equation}
By  Theorem \ref{thm42}, we have  \begin{equation}\label{eq44}\|F_{\mathcal{A}}\|_m<n^{\frac{m-1}{m}}\left(\sum \limits_{i=1}^{n}b_{ii\cdots i}^{\frac{m}{m-1}}\right)^{\frac{1}{m}}.\end{equation}
So the  desired conclusion follows by combining  the  inequalities \eqref{eq43} and \eqref{eq44}.
\end{proof}

\end{document}